\theoremstyle{plain}
\newtheorem{thm}{Theorem}[section]
\newtheorem{prop}[thm]{Proposition}
\newtheorem{cor}[thm]{Corollary}
\newtheorem{lem}[thm]{Lemma}
\theoremstyle{definition}
\newtheorem{exa}[thm]{Example}
\newtheorem{Notation}[thm]{Notation}
\newtheorem{question}{Question}
\newtheorem*{question*}{Question}
\newtheorem{rem}[thm]{Remark}
\date{}
\renewcommand{\arraystretch}{1.5}
\newcommand{\hgt}{{\rm ht}}
\newcommand{\pd}{{\rm pd}}
\newcommand{\ara}{{\rm ara}}
\newcommand{\cd}{{\rm cd}}
\newcommand{\chara}{{\rm char}}
\newcommand{\Z}{\mathbb{Z}}
\title{Cohomological dimension and arithmetical rank\\ of some determinantal ideals}
\author{Davide Bolognini, Alessio Caminata, Antonio Macchia, Maral Mostafazadehfard}
\address{{\small Davide Bolognini, Dipartimento di Matematica, Università di Genova, Via Dodecaneso 35, 16146 Genova, Italy}}
\email{{\small bolognin@dima.unige.it}}
\address{{\small Alessio Caminata, Institut f\"ur Mathematik, Universit\"at Osnabr\"uck, Albrechtstrasse 28a, 49076 Osnabr\"uck, Germany}}
\email{{\small alessio.caminata@uni-osnabrueck.de}}
\address{{\small Antonio Macchia, Fachbereich Mathematik und Informatik, Philipps-Universit\"at Marburg, Hans-Meerwein-Strasse 6, 35032 Marburg, Germany}}
\email{{\small macchia.antonello@gmail.com}}
\address{{\small Maral Mostafazadehfard, Departamento de Matem\'atica, CCEN Universidade Federal de Pernambuco, 50740-560 Recife, PE, Brazil}}
\email{{\small maralmostafazadehfard@gmail.com}}
\begin{document}
\thanks{The third author was supported by Università degli Studi di Bari.
\\ \indent The fourth author was partially supported by IMPA (Instituto Nacional de Matem\'atica Pura e Aplicada), Rio de Janeiro, Brazil}

\begin{abstract}
Let $M$ be a $(2 \times n)$ non-generic matrix of linear forms in a polynomial ring. For large classes of such matrices, we compute the cohomological dimension (cd) and the arithmetical rank (ara) of the ideal $I_2(M)$ generated by the $2$-minors of $M$. Over an algebraically closed field, any $(2 \times n)$-matrix of linear forms can be written in the Kronecker-Weierstrass normal form, as a concatenation of scroll, Jordan and nilpotent blocks. B\u{a}descu and Valla computed $\ara(I_2(M))$ when $M$ is a concatenation of scroll blocks. In this case we compute $\cd(I_2(M))$ and extend these results to concatenations of Jordan blocks. Eventually we compute $\ara(I_2(M))$ and $\cd(I_2(M))$ in an interesting mixed case, when $M$ contains both Jordan and scroll blocks. In all cases we show that $\ara(I_2(M))$ is less than the arithmetical rank of the determinantal ideal of a generic matrix. 
\end{abstract}

\maketitle

\noindent {\bf Mathematics Subject Classification (2010):} 13D45, 13C40, 14B15, 14M12. \\
\noindent {\bf Keywords:} ideals of minors, cohomological dimension, arithmetical rank.

\section*{Introduction}

Determinantal ideals are a classical topic in Commutative Algebra and have been extensively studied because of their connections with other fields, such as Algebraic Geometry, Combinatorics, Invariant Theory and Representation Theory (see e.g. \cite{BV88}). In this paper we focus on the ideals $I_2(M)$ generated by the $2$-minors of a $(2 \times n)$ non-generic matrix $M$ in a polynomial ring $R$ over a field $K$. In particular, we compute the cohomological dimension ($\cd$) and the arithmetical rank ($\ara$) for large classes of such matrices.

We recall that the \textit{cohomological dimension} of an ideal $I$ of a Noetherian ring $R$ is
\[
\cd_R(I)= \max\{i \in \Z : H^i_I(R) \neq 0 \},
\]
where $H^i_I(R)$ denotes the $i$-th local cohomology module of $R$ with support in $I$, and the \textit{arithmetical rank} of $I$ is the smallest integer $s$ for which there exist $s$ elements of $R$, $a_1,\dots,a_s$, such that $\sqrt{I}=\sqrt{(a_1,\dots,a_s)}$. If there is no ambiguity, we will write simply $\cd(I)$ and omit the subscript $R$. In general, the following inequalities hold (see, e.g., \cite[Proposition 9.2]{ILLMMSW07}):
\[
\hgt(I) \leq \cd(I) \leq \ara(I),
\]
where $\hgt$ is the height of the ideal. If $\hgt(I) = \ara(I)$, then $I$ is called a \textit{set-theoretic complete intersection}. In particular, if $I$ is a squarefree monomial ideal in the polynomial ring $R=K[x_1,\dots,x_n]$, then
\begin{equation} \label{inequalities}
\hgt(I) \leq \pd_R(R/I) = \cd(I) \leq \ara(I) \leq \mu(I),
\end{equation}
where $\mu(I)$ denotes the minimum number of generators of $I$ and the equality between the projective dimension (pd) and the cohomological dimension was proved by Lyubeznik in \cite[Theorem 1]{L83}.

For a generic $(2 \times n)$-matrix $X$, Bruns and Schw\"anzl have shown in \cite{BS90} that $\ara(I_2(X))=2n-3$ and it is independent of the field. On the other hand, the cohomological dimension has a different behavior:
\[
\cd(I_2(X))=
\begin{cases}
\hgt(I_2(X)) = n-1 & \text{if } \chara(K)=p>0 \\
\ara(I_2(X)) = 2n-3 & \text{if } \chara(K)=0
\end{cases}.
\]

Motivated by \cite[Question 8.1]{LSW14}, we investigate the following special case. 

\begin{question} \label{question}
Let $M=(x_{ij})$ be a $(2 \times n)$ non-generic matrix of linear forms and consider the ideal $I_2(M)$ in the polynomial ring $R=K[x_{ij}]$ generated by the $2$-minors of $M$. If $X$ is a $(2 \times n)$-generic matrix, is it true that $I_2(M)$ can be generated up to radical by less than $\ara(I_2(X))=2n-3$ elements, i.e. $\ara(I_2(M)) < \ara(I_2(X))$?
\end{question}

In order to study non-generic matrices, we first introduce the Kronecker-Weierstrass normal form of a matrix: a $(2 \times n)$-matrix $M$, whose entries are linear forms, can be written, by means of an invertible transformation, as a concatenation of blocks. Each block can be a nilpotent, a scroll or a Jordan matrix (see Section \ref{sectionKW}). First we treat all the possible mixed cases of $(2 \times 3)$-matrices in Remark \ref{2x3case}. In Example \ref{purecase} we compute cd and ara when $M$ consists of exactly one block. In all three cases $I_2(M)$ is a set-theoretic complete intersection with $\ara(I_2(M))=n-1$. In the rest of the paper we deal with matrices consisting of at least $2$ blocks and with $n \geq 4$ columns. In Proposition \ref{nilpotentiprop} we show that, if $X$ is a matrix of linear forms and we add a nilpotent block $N_n$ with length $n+1$ defining a new matrix $M=(X|N_n)$, then $\cd(I_2(M)) = \cd(I_2(X))+n$ and $\ara(I_2(M)) \leq \ara(I_2(X))+n$. This implies that, if we have a matrix $X$ for which $\cd(I_2(X))=\ara(I_2(X))$, then the concatenation of an arbitrary number of nilpotent blocks to $X$ preserves the equality between cohomological dimension and arithmetical rank.

In all the cases examined throughout the paper, we noticed a behavior similar to the generic case: the upper bound for the arithmetical rank is independent of the field, while the cohomological dimension is equal to the height of the ideal in positive characteristic and to the arithmetical rank in characteristic zero.

In Section \ref{sectionscrolls} we analyze concatenations of scroll blocks. B\u{a}descu and Valla, in \cite{BV10}, computed the arithmetical rank of the ideal $I_2(M)$, showing that it is independent of the field. On the other hand, using some tools from Algebraic Geometry, we prove that the cohomological dimension equals the height of the ideal if $\chara(K)=p>0$, while it is equal to the arithmetical rank if $\chara(K)=0$ (see Theorem \ref{scrolltheorem}).

In Section \ref{sectionjordan} we consider concatenations of Jordan blocks when $\chara(K)=0$. We show that also in this situation $\cd(I_2(M))=\ara(I_2(M))$.

Finally, in Section \ref{section2zeros}, we study an interesting mixed case. We start with a $(2 \times n)$-matrix $M$ with $2$ zeros in different rows and columns, and we transform it in the Kronecker-Weierstrass form. In this way $M$ can be written as a concatenation of two Jordan blocks of length $1$ with different eigenvalues and $n-2$ scroll blocks of length $1$. The ideal $I_2(M)$ is generated by both monomials and binomials. First we find an upper bound for the arithmetical rank independent of the field, showing that $\ara(I_2(M)) \leq 2n-5$. In the proof of Theorem \ref{ara2zeros} we combine the classical result by Bruns and Schw\"anzl (Theorem \ref{BrunsPoset}) and a well-known technique due to Schmitt and Vogel (Lemma \ref{SchmittVogel}). To reduce the number of generators up to radical, we sum some of them in a suitable way and use Pl\"ucker relations to prove the claim. Concerning the cohomological dimension, for small values of $n$, the ideal $I_2(M)$ is a set-theoretic complete intersection. For $n \geq 5$, in Theorem \ref{AraCd2zeros} we prove that $\cd(I_2(M))=\hgt(I_2(M))$ if $\chara(K)=p>0$, while $\cd(I_2(M))=\ara(I_2(M))$ if $\chara(K)=0$. For the last fact, we prove a stronger result, showing also the vanishing of all local cohomology modules with indices between the height and $2n-5$, if $\chara(K)=0$.

For all the classes of $(2 \times n)$-matrices considered in Sections \ref{sectionscrolls}, \ref{sectionjordan} and \ref{section2zeros}, except for small values of $n$, we always prove that $I_2(M)$ can be generated with less than $2n-3$ polynomials up to radical. Hence we give a positive answer to Question \ref{question}.

\section{Preliminaries}
In this section we recall some results that will be useful in the rest of the paper.

A well-known technique that provides an upper bound for the arithmetical rank of an arbitrary ideal is due to Schmitt and Vogel.

\begin{lem} \bf (Schmitt, Vogel \cite[Lemma p. 249]{SV79}) \it \label{SchmittVogel}
Let $R$ be a ring, $P$ be a finite subset of elements of $R$ and $P_0,\dots,P_r$ subsets of\, $P$ such that
\begin{itemize}
  \item[$(i)$] $\bigcup_{\ell=0}^r P_\ell = P$,
  \item[$(ii)$] $P_0$ has exactly one element,
  \item[$(iii)$] if $p$ and $p''$ are different elements of $P_\ell$, with $0 \leq \ell \leq r$, there is an integer $\ell'$, with $0 \leq \ell' < \ell$, and an element $p' \in P_{\ell'}$ such that $pp'' \in (p')$.
\end{itemize}
We set $\displaystyle{q_\ell = \sum_{p \in P_\ell} p^{e(p)}}$, where $e(p) \geq 1$ are arbitrary integers. We will write $(P)$ for the ideal of $R$ generated by the elements of $P$. Then $\sqrt{(P)} = \sqrt{(q_0,\dots,q_r)}$.
\end{lem}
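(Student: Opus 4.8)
The plan is to prove the two inclusions of $\sqrt{(P)} = \sqrt{(q_0,\dots,q_r)}$ separately. The inclusion $\sqrt{(q_0,\dots,q_r)} \subseteq \sqrt{(P)}$ is free, since each $q_\ell = \sum_{p \in P_\ell} p^{e(p)}$ lies in the ideal $(P)$ by $(i)$. So the real content is $\sqrt{(P)} \subseteq \sqrt{(q_0,\dots,q_r)}$, and for this I would argue by induction on $\ell$, establishing the sharper statement
\[
\sqrt{(P_0 \cup \dots \cup P_\ell)} \subseteq \sqrt{(q_0,\dots,q_\ell)} \qquad \text{for every } \ell \in \{0,\dots,r\};
\]
once $\ell = r$ is reached, condition $(i)$ identifies $P_0 \cup \dots \cup P_r$ with $P$ and the lemma follows.

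The base case $\ell = 0$ is immediate from $(ii)$: if $P_0 = \{p\}$, then $q_0 = p^{e(p)}$ and $\sqrt{(P_0)} = \sqrt{(p)} = \sqrt{(q_0)}$. For the inductive step, set $Q = P_0 \cup \dots \cup P_{\ell-1}$, so that the inductive hypothesis gives $(Q) \subseteq \sqrt{(Q)} \subseteq \sqrt{(q_0,\dots,q_{\ell-1})} \subseteq \sqrt{(q_0,\dots,q_\ell)}$. Since $P_0 \cup \dots \cup P_\ell = Q \cup P_\ell$, it is enough to check that each $p \in P_\ell$ lies in $\sqrt{(q_0,\dots,q_\ell)}$. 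Fixing such a $p$ and writing $a = p^{e(p)}$, I would invoke condition $(iii)$ in the following form: for every other $p'' \in P_\ell$ there are $\ell' < \ell$ and $p' \in P_{\ell'} \subseteq Q$ with $pp'' \in (p')$, and since the exponents are at least $1$ this forces $p^{e(p)}(p'')^{e(p'')} \in (pp'') \subseteq (Q)$. Expanding $a\,q_\ell = \sum_{p'' \in P_\ell} p^{e(p)}(p'')^{e(p'')}$, every term with $p'' \neq p$ falls into $(Q)$, so $a\,q_\ell \equiv a^2 \pmod{(Q)}$. Hence $a^2 \in (q_\ell) + (Q) \subseteq \sqrt{(q_0,\dots,q_\ell)}$, and passing to radicals gives $a = p^{e(p)} \in \sqrt{(q_0,\dots,q_\ell)}$, whence $p \in \sqrt{(q_0,\dots,q_\ell)}$. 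This closes the induction.

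The only point needing care — and the sole place where hypothesis $(iii)$ is used — is the isolation of the diagonal term $a^2$ in the product $a\,q_\ell$: condition $(iii)$ is exactly what pushes all the mixed products $p^{e(p)}(p'')^{e(p'')}$ into the ideal generated by the earlier subsets $P_0,\dots,P_{\ell-1}$, which the induction has already placed inside $\sqrt{(q_0,\dots,q_{\ell-1})}$. I do not anticipate a serious obstacle beyond bookkeeping the nested unions and radicals cleanly; in particular the integers $e(p)$ never matter beyond being positive, and no multinomial expansion or large-power argument is required, since the single quadratic congruence $a\,q_\ell \equiv a^2 \pmod{(Q)}$ does all the work.
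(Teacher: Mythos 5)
The paper states this lemma without proof, quoting it directly from Schmitt and Vogel, so there is no in-paper argument to compare against; your reconstruction is nonetheless correct and is essentially the standard one. The key step is sound: for $p \in P_\ell$ and $a = p^{e(p)}$, hypothesis $(iii)$ together with $e(p), e(p'') \geq 1$ gives $p^{e(p)}(p'')^{e(p'')} \in (pp'') \subseteq (p') \subseteq (P_0 \cup \cdots \cup P_{\ell-1})$ for every $p'' \neq p$ in $P_\ell$, so $a q_\ell \equiv a^2$ modulo that ideal, and the inductive hypothesis together with the fact that $\sqrt{(q_0,\dots,q_\ell)}$ is a radical ideal lets you conclude $p \in \sqrt{(q_0,\dots,q_\ell)}$. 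Your remark that the exponents $e(p)$ play no role beyond being positive, and that no large-power or multinomial expansion is needed, is also accurate.
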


In \cite{B89} and \cite{BS90}, Bruns and Schw\"anzl computed the cohomological dimension and the arithmetical rank of determinantal ideals of generic matrices. Let $X$ be an $(m \times n)$-matrix of indeterminates and $I_t(X)$ be the ideal generated by the $t$-minors of $X$.

\begin{thm} \bf (Bruns, Schw\"anzl, \cite[Theorem 2]{BS90}) \it \label{BrunsPoset}
Let $X$ be an $(m \times n)$-matrix of indeterminates over a ring $R$. Then
\[
\ara(I_t(X)) = mn - t^2 + 1.
\]
\end{thm}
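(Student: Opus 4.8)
The plan is to prove the two inequalities $\ara(I_t(X)) \le mn - t^2 + 1$ and $\ara(I_t(X)) \ge mn - t^2 + 1$ separately; the boundary cases $t=1$ (where $I_1(X)$ is the maximal homogeneous ideal, so $\ara = mn$) and $t > \min(m,n)$ (where $I_t(X)=0$) are trivial, so I assume $2 \le t \le \min(m,n)$. One conceptual point to keep in mind is that the lower bound cannot be extracted from local cohomology: already for $t=2$ one has $\cd(I_2(X)) = \hgt(I_2(X)) < mn - t^2 + 1$ in positive characteristic, so $\ara$ must be bounded below by a cohomological invariant insensitive to $\chara K$, namely $\ell$-adic \'etale cohomology.

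\emph{Upper bound.} I would induct on $t$, the case $t=1$ being clear. On the open chart where $x_{11}\neq 0$, elementary row and column operations put $X$ into block form $\left(\begin{smallmatrix}1&0\\0&X'\end{smallmatrix}\right)$ with $X'$ a generic $(m-1)\times(n-1)$-matrix over $R[x_{11}^{-1}]$, so that $I_t(X)R_{x_{11}} = I_{t-1}(X')R_{x_{11}}$. By the inductive hypothesis this localized ideal is generated up to radical by $(m-1)(n-1)-(t-1)^2+1 = mn-m-n-t^2+2t+1$ elements, which after clearing denominators become honest polynomials $g_j\in R$ that are $R$-linear combinations of $t$-minors of $X$. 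The crux is to produce exactly $m+n-2t$ further polynomials $f_1,\dots,f_{m+n-2t}$, again combinations of $t$-minors involving the first row and first column, so that on the closed set $V(x_{11})$ the $f_i$ together with the $g_j$ still cut out $V(I_t(X))$ set-theoretically; granting this, $\sqrt{I_t(X)} = \sqrt{(g_j,f_i)}$, and $(mn-m-n-t^2+2t+1)+(m+n-2t) = mn-t^2+1$, completing the induction. Organizing these combinations of minors so that the radical membership is checkable — a Schmitt--Vogel--type bookkeeping (Lemma \ref{SchmittVogel}) powered by the Pl\"ucker/straightening relations among minors — is the technical heart of this half.

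\emph{Lower bound.} Suppose $\sqrt{I_t(X)} = \sqrt{(a_1,\dots,a_s)}$ with $s=\ara(I_t(X))$, and pass to $\overline K$. Then $U = \mathbb{A}^{mn}_{\overline K}\setminus V(I_t(X))$, the variety of $(m\times n)$-matrices of rank $\ge t$, is covered by the $s$ affine opens $D(a_1),\dots,D(a_s)$, each of dimension $\le mn$. The \v{C}ech (Mayer--Vietoris) spectral sequence of this covering together with Artin's vanishing theorem for affine varieties gives $H^{j}_{\mathrm{et}}(U,\Z/\ell)=0$ for all $j \ge mn+s$ and all primes $\ell \ne \chara K$. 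On the other hand, $U$ can be analyzed by the same row-reduction as above (induction on $t$), or by realizing its full-rank locus as a complex Stiefel manifold, and a direct computation shows that its top nonvanishing \'etale cohomology sits in degree $2mn-t^2$. Comparing the two facts forces $2mn-t^2 < mn+s$, that is, $s \ge mn-t^2+1$.

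\emph{Main obstacle.} I expect two genuinely substantial points. First, in the upper bound, the explicit construction of the $m+n-2t$ correcting polynomials and the verification, via straightening relations, that they really do define the determinantal locus up to radical on $V(x_{11})$: a naive grouping of the $t$-minors fails here, so this step needs care. Second, in the lower bound, pinning down the cohomology of the determinantal complement precisely enough to locate its top nonvanishing class in degree $2mn-t^2$; the cleanest route is probably to mirror the inductive row-reduction so that both halves of the argument telescope along the same recursion $(m,n,t)\rightsquigarrow(m-1,n-1,t-1)$, the upper and lower bounds meeting exactly at $mn-t^2+1$.
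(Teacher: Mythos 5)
The paper does not prove this theorem (it is cited to \cite{BS90}), but it does recall the upper-bound construction from \cite{B89}: sum the minors of $X$ rank by rank in the poset $\Delta(X)$ with the partial order \eqref{PartialOrder}, producing the polynomials $p_j$ of \eqref{BrunsPolynomials}; the rank of the poset is exactly $mn-t^2+1$, and the straightening relations make the Schmitt--Vogel criterion (Lemma \ref{SchmittVogel}) apply in one stroke. Your upper-bound strategy is different --- induct on $t$ by localizing at $x_{11}$, clear denominators, then supply $m+n-2t$ extra polynomials to handle $V(x_{11})$ --- and you yourself identify that patching step as the ``technical heart'' and leave it open. I would flag more than ``this needs care'': on $V(x_{11})$ the matrix is no longer generic (it has a zero entry, precisely the degenerate situation this very paper studies), so $V(I_t(X))\cap V(x_{11})$ is not a recursive instance of the same problem, and it is not at all clear that $m+n-2t$ combinations of $t$-minors cut it out set-theoretically alongside the cleared-denominator $g_j$'s. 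The poset argument avoids this entirely by being a single global Schmitt--Vogel application with no open/closed case split.

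On the lower bound your outline does match Bruns--Schw\"anzl: pass to $\overline{K}$, cover $U=\mathbb{A}^{mn}\setminus V(I_t(X))$ by the $s$ affines $D(a_i)$, run the \v{C}ech-to-derived spectral sequence with Artin vanishing to get $H^{j}_{\mathrm{et}}(U,\Z/\ell)=0$ for $j\geq mn+s$, and compare with a known nonvanishing degree; the arithmetic $2mn-t^2<mn+s\Leftrightarrow s\geq mn-t^2+1$ is correct. But the essential input --- $H^{2mn-t^2}_{\mathrm{et}}(U,\Z/\ell)\neq 0$ --- is asserted (``a direct computation shows'') without being carried out. That computation, via the rank stratification of the determinantal variety and the cohomology of Grassmannians, or via the Stiefel-manifold model over $\mathbb{C}$ together with a comparison/lifting argument to make it characteristic-free, is the real content of the lower bound. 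As written, both halves of the proposal leave the decisive step as a black box, so the argument is incomplete.
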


In \cite[Corollary 2.2]{B89}, Bruns proved that $\ara(I_t(X)) \leq mn-t^2-1$ over any commutative ring, by defining a poset attached to the matrix $X$. We recall here this construction. We denote by $[a_1,\dots,a_t | b_1,\dots,b_t]$ the minor of $X$ with row indices $a_1,\dots,a_t$ and column indices $b_1,\dots,b_t$. On the set $\Delta(X)$ of all minors of $X$ we define a partial order given by
\begin{equation} \label{PartialOrder}
[a_1,\dots,a_u | b_1,\dots,b_u] \leq [c_1,\dots,c_v | d_1,\dots,d_v] \iff u \geq v, a_i \leq c_i \text{ and } b_i \leq d_i, i=1,\dots,v.
\end{equation}

The polynomials that generate $I_t(X)$ up to radical have the form
\begin{equation} \label{BrunsPolynomials}
p_j = \sum_{\substack{\xi \in \Delta(X),\\ \mathrm{rk}(\xi) = j}} \xi^{e(\xi)}, \text{ for } j=1,\dots,\mathrm{rk}(\Delta(X)),
\end{equation}
where $\mathrm{rk}(\Delta(X))$ denotes the rank of the poset, $e(\xi) = \frac{m}{\deg \xi}$ and $m$ is the least common multiple of the degrees of the elements $\xi \in \Delta(X)$.
\\ In particular, we are interested in the case $t=m=2$, for which
\[
p_j = \sum_{k=0}^{\lfloor \frac{j+1}{2} \rfloor -1 - \delta_j} [k+1+\delta_j,j-k+1-\delta_j]
\]
for $j=1,\dots,2n-3$, where $\delta_j= (j-n+1) \lfloor \frac{j}{n} \rfloor$. Here and in what follows, when we deal with $2$-minors, we use the notation $[a,b]$ instead of $[a,b|1,2]$.

\begin{exa}
We give an explicit example of the construction of the poset and of the polynomial generators up to radical for the ideal $I_2(X)$, where
\[
X=\begin{pmatrix}
x_1 & x_2 & x_3 & x_4 & x_5 \\
x_6 & x_7 & x_8 & x_9 & x_{10}
\end{pmatrix}.
\]
The poset $\Delta(X)$ is
\begin{figure}[ht!]
\psset{yunit=.7cm}
\begin{pspicture}(-1,-4.5)(12.4,1.8)
\psline(0,0)(6,0)
\psline(4,-1.5)(8,-1.5)
\psline(8,-3)(10,-3)
\psline(2,0)(4,-1.5)
\psline(4,0)(8,-3)
\psline(6,0)(12,-4.5)
\psdots(0,0)
\psdots(2,0)
\psdots(4,0)
\psdots(6,0)
\psdots(4,-1.5)
\psdots(6,-1.5)
\psdots(8,-1.5)
\psdots(8,-3)
\psdots(10,-3)
\psdots(12,-4.5)
\uput[70](0,0){$[1,2]$}
\uput[70](2,0){$[1,3]$}
\uput[70](4,0){$[1,4]$}
\uput[70](6,0){$[1,5]$}
\uput[70](4,-1.5){$[2,3]$}
\uput[70](6,-1.5){$[2,4]$}
\uput[70](8,-1.5){$[2,5]$}
\uput[70](8,-3){$[3,4]$}
\uput[70](10,-3){$[3,5]$}
\uput[70](12,-4.5){$[4,5]$}

\rput(-1,1.7){$\mathrm{rk}$}
\rput(0,1.7){$1$}
\rput(2,1.7){$2$}
\rput(4,1.7){$3$}
\rput(6,1.7){$4$}
\rput(8,1.7){$5$}
\rput(10,1.7){$6$}
\rput(12,1.7){$7$}
\end{pspicture}
\caption{} \label{poset}
\end{figure}

The ideal $I_2(X)$ is generated by the following $7$ polynomials up to radical:
\begin{align*}
p_1 &= [1,2] = x_1x_7-x_2x_6,\\
p_2 &= [1,3] = x_1x_8-x_3x_6,\\
p_3 &= [1,4] + [2,3] = x_1x_9-x_4x_6 + x_2x_8-x_3x_7,\\
p_4 &= [1,5] + [2,4] = x_1x_{10}-x_5x_6 + x_2x_9-x_4x_7,\\
p_5 &= [2,5] + [3,4] = x_2x_{10}-x_5x_7 + x_3x_9-x_4x_8,\\
p_6 &= [3,5] = x_3x_{10}-x_5x_8,\\
p_7 &= [4,5] = x_4x_{10}-x_5x_9.
\end{align*}
\end{exa}

While the arithmetical rank of $I_t(X)$ is independent of the ring, the cohomological dimension has a different behavior. In fact, if $R$ is a polynomial ring on a field of characteristic $0$, then $\cd(I_t(X)) = \ara(I_t(X)) = mn-t^2+1$ (see \cite[Corollary p. 440]{BS90}). On the other hand, if $R$ is a polynomial ring on a field of prime characteristic $p>0$, then $\cd(I_t(X)) = \hgt(I_t(X)) = (m-t+1)(n-t+1)$ by \cite[Proposition 4.1, p. 110]{PS73}, since $I_t(X)$ is a perfect ideal in light of \cite{HE71}.

In Sections \ref{sectionscrolls}, \ref{sectionjordan} and \ref{section2zeros}, we will see that a similar result occurs also for some classes of non-generic matrices.

The following Lemma will be employed more than once in the rest of the paper. Even if it was proved in \cite[Lemma 1.19 p. 258]{S98}, we give a more explicit proof for the sake of completeness.

\begin{lem}\label{matte}
Let $R$ be a Noetherian commutative ring and $I$ be an ideal of $R$. Consider a set of variables $y_1,\dots,y_k$ and the polynomial ring $S=R[y_1,\dots,y_k]$. Then
\[
\cd_S (I+(y_1,\dots,y_k)) = \cd_R (I)+k.
\]
\end{lem}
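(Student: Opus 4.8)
The plan is to reduce to a single new variable and then compute the relevant local cohomology by a \v{C}ech-complex argument. For the reduction, write $R[y_1,\dots,y_k]=R[y_1,\dots,y_{k-1}][y_k]$ and set $J=IR[y_1,\dots,y_{k-1}]+(y_1,\dots,y_{k-1})$; then $IR[y_1,\dots,y_k]+(y_1,\dots,y_k)=JR[y_1,\dots,y_k]+(y_k)$, so once the case of one variable is settled the general case follows by an easy induction on $k$. It thus suffices to prove $\cd_{R[y]}\bigl(IR[y]+(y)\bigr)=\cd_R(I)+1$.

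For the one-variable case, put $S=R[y]$ and $S_y=R[y,y^{-1}]$, fix generators $a_1,\dots,a_m$ of $I$, and work with the \v{C}ech--Koszul complex. Using $\check C^\bullet(a_1,\dots,a_m,y;S)=\check C^\bullet(a_1,\dots,a_m;S)\otimes_S\check C^\bullet(y;S)$ and $\check C^\bullet(y;S)=[S\to S_y]$, one realizes $\check C^\bullet(a_1,\dots,a_m,y;S)$ as a shift of the mapping cone of the localization map $\check C^\bullet(a_1,\dots,a_m;S)\to\check C^\bullet(a_1,\dots,a_m;S_y)$, which gives a short exact sequence of complexes and hence the cohomology long exact sequence
\[
\cdots\longrightarrow H^{n-1}_{IS_y}(S_y)\longrightarrow H^n_{IS+(y)}(S)\longrightarrow H^n_{IS}(S)\xrightarrow{\ \partial\ }H^n_{IS_y}(S_y)\longrightarrow\cdots,
\]
where $\partial$ is, up to sign, the natural localization map. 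Now $\check C^\bullet(a_1,\dots,a_m;S)=\check C^\bullet(a_1,\dots,a_m;R)\otimes_R R[y]$, and likewise over $R[y,y^{-1}]$; since $R$ is Noetherian and $R[y]$, $R[y,y^{-1}]$ are flat over $R$, passing to cohomology gives $H^n_{IS}(S)\cong H^n_I(R)\otimes_R R[y]$ and $H^n_{IS_y}(S_y)\cong H^n_I(R)\otimes_R R[y,y^{-1}]$, under which $\partial$ becomes $\pm\bigl(\mathrm{id}_{H^n_I(R)}\otimes\iota\bigr)$ for the inclusion $\iota\colon R[y]\hookrightarrow R[y,y^{-1}]$. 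Because $R[y,y^{-1}]/R[y]\cong\bigoplus_{j\ge 1}R\,y^{-j}$ is free, hence flat, over $R$, the map $\partial$ is injective in every degree, and the long exact sequence collapses to
\[
H^n_{IS+(y)}(S)\ \cong\ \operatorname{coker}\bigl(\partial^{\,n-1}\bigr)\ \cong\ H^{n-1}_I(R)\otimes_R\bigl(R[y,y^{-1}]/R[y]\bigr),
\]
which is a direct sum of copies of $H^{n-1}_I(R)$ indexed by $\N$. Therefore $H^n_{IS+(y)}(S)\neq 0$ exactly when $H^{n-1}_I(R)\neq 0$, i.e. $\cd_S\bigl(IS+(y)\bigr)=\cd_R(I)+1$, and with the reduction above the lemma follows.

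The routine ingredients are that the \v{C}ech complex computes local cohomology, that it commutes with the flat base changes $R\to R[y]$ and $R\to R[y,y^{-1}]$, and that adjoining the generator $y$ amounts to forming a mapping cone. The step that needs genuine care — and is the crux of the computation — is checking that the connecting homomorphism $\partial$ is precisely (a sign times) the localization map, so that after base change it becomes $\mathrm{id}\otimes\iota$ with flat cokernel $R[y,y^{-1}]/R[y]$; this injectivity is exactly what forces the long exact sequence to degenerate into the displayed isomorphisms. (One could instead run the Grothendieck spectral sequence $H^p_{IS}\bigl(H^q_{(y)}(S)\bigr)\Rightarrow H^{p+q}_{IS+(y)}(S)$, which degenerates at once because $H^q_{(y)}(S)$ is concentrated in degree $q=1$; the \v{C}ech version is preferred here for explicitness.)
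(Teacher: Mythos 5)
Your proof is correct and follows essentially the same route as the paper: reduce to one new variable, use the localization long exact sequence (the paper writes $\cdots\to H^c_I(S)\xrightarrow{\varphi}H^c_I(S)_y\to H^{c+1}_{I+(y)}(S)\to H^{c+1}_I(S)\to\cdots$, which is precisely your mapping-cone sequence), and invoke flat base change together with freeness of $S_y/S$ over $R$ to conclude. The only cosmetic difference is that you derive the sequence explicitly from the \v{C}ech/mapping-cone picture and observe the degree-wise identification $H^n_{I+(y)}(S)\cong H^{n-1}_I(R)\otimes_R\bigl(S_y/S\bigr)$ in all degrees, whereas the paper works at the single critical degree $c=\cd_R(I)$ and handles the reverse inequality $\cd_S(I+(y))\le\cd_S(I)+1$ separately as immediate.
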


\begin{proof} We proceed by induction on $k \geq 1$. It suffices to prove the statement for $k=1$. For simplicity, let $y=y_1$. 
Consider the following long exact sequence
\[
\cdots \rightarrow H_I^c(S) \stackrel{\varphi}{\rightarrow} (H_I^c(S))_y \rightarrow H_{I+(y)}^{c+1}(S) \rightarrow H_{I}^{c+1}(S) \rightarrow \cdots.
\]
Since $S$ is a free $R$-module, it follows that $H_I^{c+1}(S)=0$. Then $H_{I+(y)}^{c+1}(S)$ is the cokernel of the map $\varphi$, and hence it is isomorphic to $H_I^c(S_y/S)$, which is nonzero since $S_y/S$ is a free $R$-module.
%
%
%
%

Thus $\cd_S(I+(y)) \geq c+1$ and, on the other hand, the inequality $\cd_S(I+(y)) \leq \cd_S(I)+1$ is clear. Notice that $\cd_S(I) = c$ by virtue of the invariance of local cohomology with respect to the change of basis.
\end{proof}

\section{Kronecker-Weierstrass decomposition} \label{sectionKW}
Let $K$ be an algebraically closed field and $R$ be a polynomial ring over $K$. We require $K$ to be algebraically closed in order to transform the matrix into the Kronecker-Weierstrass form, but we can drop this assumption if the matrix is already in that form.

We consider a $(2\times n)$-matrix $M$, whose entries are linear forms of $R$. From the Kronecker-Weierstrass theory of matrix pencils, there exist two invertible matrices $C$ and $C'$ such that the matrix $X=CMC'$ is a concatenation of blocks,
\begin{equation}\label{KWform}
X=\big(N_{n_1}|\cdots|N_{n_c}|J_{\lambda_1,m_1}|\cdots|J_{\lambda_d,m_d}|B_{\ell_1}|\cdots|B_{\ell_g}\big),
\end{equation}
where the blocks are matrices of the form

$$N_{n_i}=\begin{pmatrix}
x_{i,1}&x_{i,2}& \cdots & x_{i,n_i}&0\\
0&x_{i,1}& \cdots & x_{i,n_{i-1}}&x_{i,n_i}
\end{pmatrix},$$

$$J_{\lambda_j,m_j}=\begin{pmatrix}
y_{j,1}&y_{j,2}& \cdots & y_{j,m_j}\\
\lambda_jy_{j,1}& y_{j,1}+\lambda_jy_{j,2}&\cdots&y_{j,m_{j-1}}+\lambda_jy_{j,m_j}
\end{pmatrix},$$

$$B_{\ell_p}=\begin{pmatrix}
z_{p,0}&z_{p,1}& \cdots &z_{p,\ell_{p-2}}&z_{p,\ell_{p-1}}\\
z_{p,1}&z_{p,2}& \cdots &z_{p,\ell_{p-1}}&z_{p,\ell_p}
\end{pmatrix}.$$

Here, $\mathbf{x}=\{x_{i,h}\},\mathbf{y}=\{y_{j,h}\},\mathbf{z}=\{z_{p,h}\}$ are independent linear forms of $R$, $c,d,g \geq 0$, $n_i, m_j, \ell_p$ are positive integers, and $\lambda_j \in K$. We call $N_{n_i}$ \textit{nilpotent block} of length $n_i+1$, $J_{\lambda_j,m_j}$ \textit{Jordan block} of length $m_j$ and eigenvalue $\lambda_j$ and $B_{\ell_p}$ \textit{scroll block} of length $\ell_p$, respectively. The number of scroll and nilpotent blocks $g$ and $c$, together with the lengths $\ell_p$ and $n_i$ of each of these blocks, are invariants for $M$, while the eigenvalues $\lambda_j$ of the Jordan blocks and the length $m_j$ of each of them are not invariant.  We call the matrix $X$ a \textit{Kronecker-Weierstrass normal form} of $M$. Since the matrices $C$ and $C'$ are invertible, the determinantal ideals defined by $X$ and $M$ coincide. For a detailed discussion of Kronecker-Weierstrass theory we refer to  \cite[Chapter 12]{G59}.

\begin{rem}
We point out that the blocks of length $1$ are the following:
\begin{equation*}
N_1=\begin{pmatrix}0 \\0\end{pmatrix}, \ \ \ J_{\lambda,1}=\begin{pmatrix}y_1 \\ \lambda y_1\end{pmatrix} \text{and} \ \ \ B_{1}=\begin{pmatrix}z_0 \\ z_1\end{pmatrix}.
\end{equation*}
In particular, a $(2\times n)$-matrix with generic entries is a concatenation of exactly $n$ scroll blocks of the form $B_1$.
\end{rem}

\begin{exa}
Consider the following matrix of linear forms over the polynomial ring $K[x_1,\dots,x_6]$
\begin{equation*}
\begin{pmatrix}
x_1+x_6 & x_2 & x_2+x_3 & x_4 & x_2+x_6 & x_4 \\
-x_6 & x_1 & x_1-x_3+x_4 & -x_4+x_5 & x_1-x_6 & -x_4+x_5+x_6
\end{pmatrix}.
\end{equation*}
Subtracting the second column from the fifth and the fourth from the sixth, we get
\begin{equation*}
\begin{pmatrix}
x_1+x_6 & x_2 & x_2+x_3 & x_4 & x_6 & 0 \\
-x_6 & x_1 & x_1-x_3+x_4 & -x_4+x_5 & -x_6 & x_6
\end{pmatrix}.
\end{equation*}
Subtracting the second column from the third and the fifth from the first, we get
\begin{equation*}
\begin{pmatrix}
x_1 & x_2 & x_3 & x_4 & x_6 & 0 \\
0 & x_1 & -x_3+x_4 & -x_4+x_5 & -x_6 & x_6
\end{pmatrix}.
\end{equation*}
Then adding the first row to the second one we obtain the canonical form
\begin{equation*}
\left(\begin{array}{cc|cc|cc}
x_1 & x_2 & x_3 & x_4 & x_6 & 0 \\
x_1 & x_1+x_2 & x_4 & x_5 & 0 & x_6
\end{array}\right),
\end{equation*}
which is a concatenation of a Jordan block $J_{1,2}$ of length $2$ and eigenvalue $1$, a scroll block $B_2$ of length $2$ and a nilpotent block $N_2$ of length $2$.
\end{exa}

When the matrix is in the Kronecker-Weierstrass form, a result due to Nasrollah Nejad and Zaare-Nahandi allows us to easily compute the height of the ideal of $2$-minors. Since we will use it several times, we state it here for ease of reference.

\begin{prop}{\bf (Nasrollah Nejad, Zaare-Nahandi, \cite[Proposition 2.2]{NZ11})}\label{abbasprop}
Let $X$ be a matrix in the Kronecker-Weierstrass form \eqref{KWform}. Then the height of $I_2(X)$ in $K[\mathbf{x},\mathbf{y},\mathbf{z}]$ is given by the following formulas.
\begin{enumerate}
\item If $X$ consists of exactly $c\geq1$ nilpotent blocks, then
\begin{equation*}
\mathrm{ht}\big(I_2(X)\big)=\sum_{i=1}^cn_i.
\end{equation*}
\item If $X$ consists of $c \geq 0$ nilpotent blocks and $g \geq 1$ scroll blocks, then
\begin{equation*}
\mathrm{ht}\big(I_2(X)\big)=\sum_{i=1}^cn_i+\sum_{p=1}^{g}\ell_p-1.
\end{equation*}
\item If $X$ consists of $c \geq 0$ nilpotent blocks, $g \geq 0$ scroll blocks and $d\geq1$ Jordan blocks, then
\begin{equation*}
\mathrm{ht}\big(I_2(X)\big)=\sum_{i=1}^cn_i+\sum_{p=1}^{g}\ell_p+\sum_{j=1}^dm_j-\gamma,
\end{equation*}
where $\gamma$ is the maximum number of Jordan blocks with the same eigenvalue.
\end{enumerate}
\end{prop}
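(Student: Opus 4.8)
The plan is to recast the height as a dimension count over an algebraic closure (the height of an ideal in a polynomial ring over a field is unchanged under extension of the base field, so we may assume $K$ algebraically closed). Write $R = K[\mathbf{x},\mathbf{y},\mathbf{z}]$, let $N$ be its number of variables, so that $N = \sum_{i} n_i + \sum_j m_j + \sum_p (\ell_p+1)$, and put $V := V(I_2(X)) \subseteq \mathbb{A}^N$, so that $\hgt I_2(X) = N - \dim V$. Denote by $\mathbf{r}_1,\mathbf{r}_2$ the two rows of $X$, viewed as vectors of $n$ linear forms. A point $p\in\mathbb{A}^N$ lies in $V$ exactly when $\mathbf{r}_1(p)$ and $\mathbf{r}_2(p)$ are linearly dependent, i.e.\ when $a\,\mathbf{r}_1 + b\,\mathbf{r}_2$ vanishes at $p$ for some $(a,b)\neq(0,0)$; hence $V = \bigcup_{[a:b]\in\mathbb{P}^1} L_{[a:b]}$, where $L_{[a:b]}\subseteq\mathbb{A}^N$ is the linear subspace cut out by the $n$ entries of $a\,\mathbf{r}_1 + b\,\mathbf{r}_2$. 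I would then pass to the incidence variety $\Gamma := \{(p,[a:b]) : p\in L_{[a:b]}\}\subseteq\mathbb{A}^N\times\mathbb{P}^1$, with projections $\pi_1\colon\Gamma\to\mathbb{A}^N$ (whose image is $V$) and $\pi_2\colon\Gamma\to\mathbb{P}^1$ (surjective, with fiber $L_{[a:b]}$).

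The first step is to compute, for each $[a:b]$, the rank $\rho(a,b)$ of the $n\times N$ coefficient matrix of the entries of $a\,\mathbf{r}_1 + b\,\mathbf{r}_2$, since $\dim L_{[a:b]} = N - \rho(a,b)$. As the Kronecker--Weierstrass blocks involve pairwise disjoint variables, this matrix is block diagonal and $\rho$ is the sum of the block contributions. A short Gaussian-elimination check, block by block, shows that a nilpotent block $N_{n_i}$ contributes rank $n_i$ and a scroll block $B_{\ell_p}$ contributes rank $\ell_p$ for \emph{every} $[a:b]\in\mathbb{P}^1$, whereas a Jordan block $J_{\lambda_j,m_j}$ contributes its full $m_j$ for every $[a:b]$ except over the single point $q_j := [-\lambda_j:1]$, where its contribution is only $m_j-1$. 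Hence $\rho$ equals the constant $r := \sum_i n_i + \sum_j m_j + \sum_p \ell_p$ off the finite set $\{q_1,\dots,q_d\}$, and at each $q_j$ it drops by exactly the number of Jordan blocks with eigenvalue $\lambda_j$, the biggest such drop being $\gamma$. Over $U := \mathbb{P}^1\setminus\{q_1,\dots,q_d\}$, $\Gamma|_U$ is a vector bundle of rank $N-r$ over $U$, hence irreducible of dimension $N-r+1$, while over each $q_j$ the fiber $L_{q_j}$ has dimension $N-\rho(q_j) > N-r$; by upper semicontinuity of fiber dimension $L_{q_j}$ is not contained in $\overline{\Gamma|_U}$, so $\{q_j\}\times L_{q_j}$ is an irreducible component of $\Gamma$ of dimension $N-\rho(q_j)$, the largest of these having dimension $N-r+\gamma$. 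These are all the components of $\Gamma$.

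Next I would descend along $\pi_1$. Its fiber over $p\in V$ consists of the $[a:b]$ annihilating $a\,\mathbf{r}_1(p)+b\,\mathbf{r}_2(p)$, a single point unless $X(p)=0$; and since every variable of $R$ occurs among the entries of $X$, the locus $\{X=0\}$ is just the origin. Thus $\pi_1$ is injective away from $\pi_1^{-1}(0)$, mapping each component of $\Gamma$ other than $\{0\}\times\mathbb{P}^1$ isomorphically onto its image, while it contracts $\{0\}\times\mathbb{P}^1$ — which is a component exactly when $N=r$, i.e.\ when there are no scroll blocks — onto the origin. Writing $g := N-r$ for the number of scroll blocks, one obtains: $\dim V = 0$ if $g=d=0$; $\dim V = g+1$ if $d=0$ and $g\geq 1$; and $\dim V = g+\gamma$ if $d\geq 1$, the component over the most repeated eigenvalue (of dimension $g+\gamma\geq g+1$) being dominant. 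Subtracting each value from $N=r+g$ yields exactly $\sum n_i$, then $\sum n_i + \sum \ell_p - 1$, then $\sum n_i + \sum m_j + \sum\ell_p - \gamma$, which are the three asserted formulas.

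The routine parts are the block-by-block rank computations and the remark that $\{X=0\}=\{0\}$. The genuinely delicate point is organizing the reducibility of $\Gamma$: one must check that the fibers over the eigenvalue points $q_j$ really do contribute irreducible components (rather than being absorbed into $\overline{\Gamma|_U}$), keep track of which component dominates $V$ in each of the three regimes, and notice that $\pi_1$ collapses the vertical $\mathbb{P}^1$ precisely when $g=0$. This case distinction is exactly what produces the three separate formulas — in particular the $-1$ in case $(2)$ and the $-\gamma$ in case $(3)$.
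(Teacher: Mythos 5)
Your argument is correct. The paper cites this proposition from \cite[Proposition 2.2]{NZ11} without reproducing a proof, so there is no in-text argument to compare against. Your strategy — describing $V(I_2(X))$ as the union $\bigcup_{[a:b]\in\mathbb{P}^1} L_{[a:b]}$ of linear sections, resolving it by the incidence correspondence $\Gamma\subset\mathbb{A}^N\times\mathbb{P}^1$, computing the fiber dimension $[a:b]\mapsto N-\rho(a,b)$ block by block, and descending via $\pi_1$ — is the natural geometric reading of the Kronecker--Weierstrass pencil structure, and all the numerical bookkeeping checks out: $N-r$ equals the number $g$ of scroll blocks, $\dim\overline{\Gamma|_U}=g+1$, $\dim L_{q_j}=g+\gamma_j$, the sets $\{q_j\}\times L_{q_j}$ really are irreducible components because no fiber of the irreducible variety $\overline{\Gamma|_U}$ over a closed point of $\mathbb{P}^1$ can exceed the generic fiber dimension $g$, and $\{X=0\}=\{0\}$ since every variable occurs as an entry of $X$. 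One small overstatement: asserting that $\pi_1$ maps components other than $\{0\}\times\mathbb{P}^1$ ``isomorphically'' onto their images is more than you have checked or need; what you do have, and what the dimension count actually requires, is that $\pi_1$ is injective away from $\pi_1^{-1}(0)=\{0\}\times\mathbb{P}^1$, hence generically finite and dimension-preserving on each such component. The cited source's treatment is algebraic; your incidence picture is its geometric counterpart, with the maximal $L_{[a:b]}$'s corresponding to the minimal primes of $I_2(X)$.
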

We are interested in computing the cohomological dimension and the arithmetical rank of $I_2(X)$ for some special Kronecker-Weierstrass decompositions. We begin with some easy cases.

If $X$ is $(2\times2)$-matrix, then the ideal $I=I_2(X)$ is principal. Hence $\cd(I)=\ara(I)=1$, provided that $I$ is not the zero ideal. The first non trivial case occurs for matrices of size $2 \times 3$. In \cite[Corollary 6.5]{HKM09}, Huneke, Katz, and Marley proved that, if $A$ is a commutative Noetherian ring containing the field of rational numbers, with $\dim(A) \leq 5$, and $I=I_2(M)$ is the ideal generated by the $2$-minors of a $(2 \times 3)$-matrix $M$ with entries in $A$, then $H_I^{3}(A)=0$. In the following remark we show that, under these assumptions, the arithmetical rank is strictly less than $3$ whenever $M$ is a matrix of linear forms.

\begin{rem}\label{2x3case}
Let $A,M$ and $I$ be as the above. Suppose that $M$ is in the Kronecker-Weierstrass form. If $M$ contains at least one nilpotent block, the result is clear. If $M$ consists of only scroll blocks, the arithmetical rank has been settled in \cite{BV10} and the cohomological dimension is explicitly studied in Section \ref{sectionscrolls}. On the other hand, the case of a concatenation of Jordan blocks is studied in Section \ref{sectionjordan}. It remains to consider the concatenation of scroll and Jordan blocks. The matrix $M$ with a scroll block of length $2$ and a Jordan block of length $1$ is a special case of \cite[Theorem 2.1]{V81}. Suppose now that $M$ consists of two Jordan blocks of length $1$ and one scroll block of length $1$. If the Jordan blocks have the same eigenvalue, then $M$ can be transformed into a matrix with two zeros on the same row, hence $I_2(M)$ is a squarefree monomial ideal generated by $2$ monomials and the arithmetical rank is $2$. This is also the case if $M$ consists of a scroll block of length $1$ and a Jordan block of length $2$. Otherwise, if the Jordan blocks have different eigenvalues, $M$ can be transformed into a matrix with one zero and the arithmetical rank is $2$ in light of \cite[Example 2]{B07}. This is also the case if $M$ has two scroll blocks of length $1$ and a Jordan block of length $1$. Thus we completely settle the case of $(2 \times 3)$-matrices of linear forms.
\end{rem}

This is the starting point of our investigation about the cohomological dimension and the arithmetical rank of determinantal ideals of $(2 \times n)$-matrices of linear forms.

\begin{exa}\label{purecase}
Let $X$ be a $(2\times (n+1))$-matrix in the Kronecker-Weierstrass form and assume that $X$ consists of exactly one block.
\begin{itemize}
\item[i)] If $X=B_{n+1}$ is a scroll block, where
\begin{equation*}
B_{n+1}=\begin{pmatrix}
z_0 & z_1 & \cdots  & z_{n-1} & z_{n}\\
z_1 & z_2 & \cdots  & z_{n} & z_{n+1}
\end{pmatrix},
\end{equation*}
then $I_2(X)$ is the defining ideal of a rational normal curve of degree $n$ in $\mathbb{P}^n$. In \cite{RV83}, Robbiano and Valla proved that $I_2(X)$ is set-theoretic complete intersection with  $\mathrm{ht}(I_2(X))=\cd(I_2(X))=\ara(I_2(X))=n$. In particular $\sqrt{I_2(X)}=\sqrt{(F_1,\dots,F_n)}$, where
\begin{equation*}
F_i(z_0,\dots,z_{n+1})=\sum_{\alpha=0}^i(-1)^{\alpha}\binom{i}{\alpha}z_{i+1}^{i-\alpha}z_{\alpha}z_i^{\alpha}, \ \ \ \ \ \ i=1,\dots,n.
\end{equation*}

\item[ii)] If $X=N_n$ is a nilpotent block of length $n+1$, where
    \begin{equation}\label{purenilpotent}
    N_n=\begin{pmatrix}
    x_1 & x_2 & \cdots & x_n & 0\\
    0 & x_1 & \cdots & x_{n-1} & x_n
    \end{pmatrix},
    \end{equation}
    it easy to check that $\sqrt{I_2(X)}=(x_1,\dots,x_n)$. Then $\hgt(I_2(X))=\cd(I_2(X))=\ara(I_2(X))=n$. In particular, $I_2(X)$ is set-theoretic complete intersection.

\item[iii)] If $X=J_{\lambda,n+1}$ is a Jordan block of eigenvalue $\lambda$ and length $n+1$, then, by subtracting $\lambda$ times the first row from the second one, we transform the matrix into the following:
    \begin{equation*}
    \begin{pmatrix}
    y_{1}&y_{2}& \cdots &y_{n} & y_{n+1}\\
    0 & y_{1}&\cdots & y_{n-1} & y_{n}
    \end{pmatrix}.
    \end{equation*}
    It is now easy to see that $\sqrt{I_2(X)}=(y_1,\dots,y_{n})$. Then $I_2(X)$ is set-theoretic complete intersection with $\mathrm{ht}(I_2(X))=\cd(I_2(X))=\ara(I_2(X))=n$.
\end{itemize}
\end{exa}

Remark \ref{2x3case} and Example \ref{purecase} describe completely the situation where the number of blocks is $1$ or the number of columns is $n=3$, respectively. So for the rest of the paper we may assume, if necessary, that the number of blocks is at least $2$ and $n \geq 4$.

As it appears in Example \ref{purecase}, the ideal of minors of nilpotent blocks correspond to linear subspaces. These are complete intersections. Precisely we have the following result.

\begin{prop}\label{nilpotentiprop}
Let $X=(l_i)$ be a matrix of linear forms, where $l_i \in R=K[y_1,\dots,y_m]$. Let $J=I_2(X)$, $N_n$ be a nilpotent block of length $n+1$ as in \eqref{purenilpotent} and $S=R[x_1,\dots,x_n]$. Consider the matrix $M=(X|N_n)$ given by the concatenation of $X$ and $N_n$, then:
\begin{equation*}
\cd_S \big(I_2(M)\big)=\cd_R(J)+n\qquad \text{ and }\qquad  \ara \big(I_2(M)\big) \leq \ara(J)+n.
\end{equation*}
\end{prop}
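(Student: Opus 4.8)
The plan is to show that, up to radical, $I_2(M)$ decomposes as $JS$ together with the linear ideal $(x_1,\dots,x_n)$, and then to invoke Lemma \ref{matte} for the cohomological dimension and a direct radical computation for the arithmetical rank.

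\emph{Step 1: the radical identity.} Write $M=(X\mid N_n)$. A $2$-minor of $M$ is determined by a pair of columns. If both columns come from $X$, the minor is a generator of $JS$. If both come from $N_n$, the minor is a $2$-minor of $N_n$, hence an element of $(x_1,\dots,x_n)$ (indeed of $(x_1,\dots,x_n)^2$). If the two columns are mixed — say $\binom{a}{b}$ from $X$ with $a,b$ linear forms of $R$, and a column of $N_n$ — the minor has the shape $ax_{i-1}-bx_i$ (with the conventions $x_0=0$ and $ax_n$ for the last column of $N_n$), which again lies in $(x_1,\dots,x_n)$. Therefore $I_2(M)\subseteq JS+(x_1,\dots,x_n)$. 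For the reverse inclusion up to radical, note $JS\subseteq I_2(M)$ since the $2$-minors of $X$ are among the generators of $I_2(M)$, and by Example \ref{purecase}(ii) we have $(x_1,\dots,x_n)=\sqrt{I_2(N_n)}\subseteq\sqrt{I_2(M)}$ because $I_2(N_n)\subseteq I_2(M)$. Hence
\[
\sqrt{I_2(M)}=\sqrt{JS+(x_1,\dots,x_n)}.
\]

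\emph{Step 2: cohomological dimension.} Since local cohomology with support in an ideal depends only on its radical, $\cd_S(I_2(M))=\cd_S\big(JS+(x_1,\dots,x_n)\big)$. Applying Lemma \ref{matte} with the ideal $J$ of $R$ and the $n$ new variables $x_1,\dots,x_n$ gives $\cd_S\big(JS+(x_1,\dots,x_n)\big)=\cd_R(J)+n$, which is the first assertion.

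\emph{Step 3: arithmetical rank.} Put $s=\ara(J)$ and choose $g_1,\dots,g_s\in R$ with $\sqrt{J}=\sqrt{(g_1,\dots,g_s)}$. From $JS\subseteq\sqrt{(g_1,\dots,g_s)S}$ and $g_i\in\sqrt{J}\subseteq\sqrt{JS}$, a routine check yields $\sqrt{JS+(x_1,\dots,x_n)}=\sqrt{(g_1,\dots,g_s,x_1,\dots,x_n)}$. Combining with Step~1, $\sqrt{I_2(M)}$ is generated up to radical by the $s+n$ elements $g_1,\dots,g_s,x_1,\dots,x_n$, so $\ara(I_2(M))\le\ara(J)+n$.

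There is no genuinely hard step here: the argument is formal once Step~1 is in place. The two points deserving (minor) attention are the observation that every $2$-minor of $M$ using a column of $N_n$ lies in $(x_1,\dots,x_n)$ — immediate from the shape of the nilpotent block — and the bookkeeping needed to transport the radical identity $\sqrt{J}=\sqrt{(g_1,\dots,g_s)}$ from $R$ to the polynomial extension $S$. The single substantive ingredient is Lemma \ref{matte}.
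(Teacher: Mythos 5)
Your proof is correct and follows essentially the same route as the paper's: establish the radical identity $\sqrt{I_2(M)}=\sqrt{JS+(x_1,\dots,x_n)}$ by the same column-by-column classification of the $2$-minors, then feed it into Lemma~\ref{matte} for the cohomological dimension and transport a set of radical generators for the arithmetical rank bound. The only cosmetic difference is that the paper phrases the identity as $\sqrt{I_2(M)}=\sqrt{\sqrt{J}+\mathfrak{n}}$ with $\mathfrak{n}=(x_1,\dots,x_n)$, which is the same ideal.
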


\begin{proof}
Set $r=\ara(J)$. Then $\sqrt{J}=(p_1,\dots,p_r)$, for some polynomials $p_i \in R$. We define $\mathfrak{n}=(x_1,\dots,x_n)$, then $\sqrt{I_2(N_n)}=\mathfrak{n}$ by Example \ref{purecase} ii). We consider the ideals $J$, $\mathfrak{n}$ and $I_2(M)$ in the ring $S$ and we prove that
\begin{equation}\label{uguaglianzanilpotenti}
\sqrt{I_2(M)}=\sqrt{\sqrt{J}+\mathfrak{n}}.
\end{equation}
\par We have  $I_2(N_n)\subseteq I_2(M)$ and $J\subseteq I_2(M)$, hence $J+I_2(N_n)\subseteq I_2(M)$. It follows that
\begin{equation*}
\sqrt{\sqrt{J}+\mathfrak{n}}=\sqrt{\sqrt{J}+\sqrt{I_2(N_n)}}=\sqrt{J+I_2(N_n)}\subseteq\sqrt{I_2(M)},
\end{equation*}
where the second equality holds in general for every pair of ideals in a polynomial ring.
For the other inclusion, consider a $2$-minor $q$ of $M$. If $q$ involves two columns of $X$ or two columns of $N_n$, then clearly $q\in J$ or $q\in \mathfrak{n}$ respectively. Otherwise $q=l_ix_{\alpha}-l_jx_{\beta}$ or $q=-l_ix_1$ or $q=l_ix_n$. In any case it is clear that $q\in\mathfrak{n}$. This shows that $I_2(M)\subset\sqrt{J}+\mathfrak{n}$, which implies  $\sqrt{I_2(M)}\subset\sqrt{\sqrt{J}+\mathfrak{n}}$.
\par From \eqref{uguaglianzanilpotenti} and Lemma \ref{matte} we get
\begin{equation*}
\cd_S\big(I_2(M)\big) = \cd_S\big(\sqrt{I_2(M)}\big) = \cd_S\left(\sqrt{\sqrt{J}+\mathfrak{n}}\right) = \cd_S\big(\sqrt{J}+\mathfrak{n}\big) = \cd_R\big(\sqrt{J}\big)+n = \cd_R(J)+n.
\end{equation*}

Moreover the equality \eqref{uguaglianzanilpotenti} implies $\ara\big(I_2(M)\big)\leq\ara(J)+n$.
\end{proof}

We close this Section by providing explicitly an upper bound for the arithmetical rank that was implicit in \cite{B07}. Let $n,k$ be positive integers and $f_1,\dots,f_k$ be polynomials in $R=K[x_1,\dots,x_n]$. We recall that a \textit{syzygy} of $(f_1,\cdots,f_k)$ is a vector $[s_1,\cdots,s_k] \in R^k$ such that $\sum_{i=1}^k s_if_i=0$.

\begin{lem}\label{maral}
Let $k \geq 2$ be an integer and $I=(f_1,\dots,f_k)$ be a homogeneous ideal in $R=K[x_1,\dots,x_n]$. Assume that there exist a positive integer $r$ and a syzygy $[g_1,\dots,g_{k-1}] \in R^{k-1}$ of $(f_1,\dots,f_{k-1})$ such that $f_k^r \in (g_1,\dots,g_{k-1})$. Then $\ara(I) \leq k-1$.
\end{lem}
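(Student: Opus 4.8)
The plan is to apply the Schmitt–Vogel lemma (Lemma~\ref{SchmittVogel}) with a carefully chosen subset $P$ and stratification $P_0,\dots,P_{k-1}$, exploiting the syzygy to collapse two of the original generators into a single one. First I would observe that, since $[g_1,\dots,g_{k-1}]$ is a syzygy of $(f_1,\dots,f_{k-1})$, we have $g_1f_1+\cdots+g_{k-1}f_{k-1}=0$, and $f_k^r\in(g_1,\dots,g_{k-1})$ means $f_k^r=\sum_{i=1}^{k-1}h_ig_i$ for suitable $h_i\in R$. The idea, following the technique implicit in \cite{B07}, is that the element $h:=\sum_{i=1}^{k-1}h_if_i$ can serve as a replacement that simultaneously controls both $f_k$ and the $f_i$'s up to radical, so that $\sqrt{I}=\sqrt{(f_1,\dots,f_{k-1},f_k)}=\sqrt{(f_1,\dots,f_{k-2},\, f_{k-1}+\lambda h,\, \dots)}$ can be written with $k-1$ elements.

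More precisely, I would set things up so that one of the $k-1$ new generators is (a suitable power-combination of) $f_k$ together with one of the $f_i$, and verify the three Schmitt–Vogel conditions directly. Concretely, take $P=\{f_1,\dots,f_{k-1},f_k\}$ and define $P_0=\{f_1\}$, $P_\ell=\{f_{\ell+1}\}$ for $1\le\ell\le k-2$, and $P_{k-1}=\{f_{k-1},f_k\}$ (reordering the $f_i$ if necessary). Condition $(i)$ is immediate and $(ii)$ holds by construction. For condition $(iii)$ the only nontrivial instance is $\ell=k-1$ with $p=f_{k-1}$, $p''=f_k$: I need an $\ell'<k-1$ and $p'\in P_{\ell'}$ with $f_{k-1}f_k\in(p')$. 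Here is where the syzygy enters — I would use the relation $g_1f_1+\cdots+g_{k-1}f_{k-1}=0$ together with $f_k^r=\sum h_ig_i$ to produce such a divisibility, possibly after passing to powers $e(p)$ of the generators, which the lemma permits. If the literal statement of Schmitt–Vogel is too rigid here, the fallback is to argue directly: set $q_{k-1}=f_{k-1}^{a}+f_k^{b}$ for appropriate exponents, show $f_k$ and $f_{k-1}$ both lie in $\sqrt{(f_1,\dots,f_{k-2},q_{k-1})}$ using $g_1f_1+\cdots+g_{k-1}f_{k-1}=0$ to express $g_{k-1}f_{k-1}$, hence $f_k^r$ up to the ideal generated by $f_1,\dots,f_{k-2}$, in terms of the other generators.

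The main obstacle I anticipate is verifying condition $(iii)$ (or its direct analogue) rigorously: the syzygy gives $g_{k-1}f_{k-1}=-\sum_{i=1}^{k-2}g_if_i\in(f_1,\dots,f_{k-2})$, but I need to convert the containment $f_k^r\in(g_1,\dots,g_{k-1})$ into control over $f_{k-1}f_k$ modulo the lower strata, and this conversion may require choosing the exponents $e(p)$ and the power $r$ compatibly, and perhaps multiplying through by extra factors. A clean way to finish: show $f_k^{r}\cdot f_{k-1}=\bigl(\sum_{i=1}^{k-1}h_ig_i\bigr)f_{k-1}=\sum_{i=1}^{k-2}h_ig_if_{k-1}+h_{k-1}g_{k-1}f_{k-1}$, and substitute $g_{k-1}f_{k-1}=-\sum_{i=1}^{k-2}g_if_i$ to land inside $(f_1,\dots,f_{k-2})$; this yields $f_k^{r}f_{k-1}\in(f_1,\dots,f_{k-2})$, which together with $\sqrt{(f_1,\dots,f_{k-2},f_{k-1}^{s}+f_k^{t})}$ for well-chosen $s,t$ recovers all of $f_1,\dots,f_k$ up to radical. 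I would write out this substitution carefully and then invoke Lemma~\ref{SchmittVogel} (or the elementary radical manipulation) to conclude $\ara(I)\le k-1$.
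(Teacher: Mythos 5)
Your proposal contains a genuine gap, and the paper's proof takes a structurally different route.

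Two concrete problems. First, the Schmitt--Vogel stratification you propose uses $k$ subsets $P_0,\dots,P_{k-1}$, which would produce $k$ polynomial sums $q_0,\dots,q_{k-1}$ and hence only $\ara(I)\le k$, not $k-1$; moreover, for the pair $p=f_{k-1}$, $p''=f_k$ in $P_{k-1}$, condition $(iii)$ demands $f_{k-1}f_k\in(p')$ for a \emph{single} element $p'$, and there is no reason the syzygy hypothesis should force $f_{k-1}f_k$ to be divisible (even up to powers) by any one of the $f_i$. Second, and more fundamentally, your fallback computation is incorrect: after writing
\[
f_k^{r}f_{k-1}=\sum_{i=1}^{k-2}h_ig_if_{k-1}+h_{k-1}g_{k-1}f_{k-1}
\]
and substituting $g_{k-1}f_{k-1}=-\sum_{i=1}^{k-2}g_if_i$, only the \emph{second} term lands in $(f_1,\dots,f_{k-2})$. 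The first sum $\sum_{i=1}^{k-2}h_ig_if_{k-1}$ is not controlled at all: the syzygy tells you nothing about $g_if_{k-1}$ for $i<k-1$. Already for $k=3$ with $f_1=x$, $f_2=y$, $g_1=y$, $g_2=-x$, the leftover $h_1g_1f_2=h_1y^2$ need not lie in $(x)$. So the claimed containment $f_k^{r}f_{k-1}\in(f_1,\dots,f_{k-2})$ is false in general.

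The key idea you are missing is to perturb \emph{all} of $f_1,\dots,f_{k-1}$ at once, not just the last one. The paper sets $q_i=f_kh_i+f_i$ for every $i=1,\dots,k-1$, where $f_k^r=\sum h_ig_i$. The syzygy then yields
\[
\sum_{i=1}^{k-1}g_iq_i=f_k\sum_{i=1}^{k-1}g_ih_i+\sum_{i=1}^{k-1}g_if_i=f_k\cdot f_k^{r}+0=f_k^{r+1},
\]
so $f_k^{r+1}\in(q_1,\dots,q_{k-1})$ for free, and a binomial expansion of $g^{s(r+1)}$ (where $g^s\in I$) finishes the radical equality $\sqrt{I}=\sqrt{(q_1,\dots,q_{k-1})}$. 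This uses the syzygy globally rather than trying to push $f_{k-1}f_k$ down a single level, which is what Schmitt--Vogel would require and which the hypotheses do not supply.
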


\begin{proof}
Since $f_k^r \in (g_1,\dots,g_{k-1})$, there exist $h_1,\dots,h_{k-1} \in R$ such that $f_k^r = h_1g_1 + \cdots + h_{k-1}g_{k-1}$. Let $q_i = f_kh_i+f_i$ for $1 \leq i \leq k-1$. We claim that $\sqrt{I} = \sqrt{(q_1,\dots,q_{k-1})}$. Clearly $\sqrt{(q_1,\dots,q_{k-1})} \subset \sqrt{I}$, since $(q_1,\dots,q_{k-1}) \subset I$. For the other inclusion, let $g \in \sqrt{I}$. Then there exist $r_1,\dots,r_k \in R$ such that $g^s = r_1f_1 + \cdots + r_kf_k$ for some positive integer $s$. Then
\[
g^s=\sum_{i=1}^{k-1}r_iq_i-f_k \left(\sum_{i=1}^{k-1}r_ih_i-r_k \right).
\]
We claim that $f_k^{r+1} \in (q_1,\dots,q_{k-1})$. In fact,
\[
\sum_{i=1}^{k-1}g_iq_i=\sum_{i=1}^{k-1}g_i(f_kh_i+f_i)=f_k\left(\sum_{i=1}^{k-1}g_ih_i\right)+\sum_{i=1}^{k-1}g_if_i=f_k^{r+1},
\]
where the last equality holds since $[g_1,\dots,g_{k-1}]$ is a syzygy of $(f_1,\dots,f_{k-1})$. Then
\begin{eqnarray*}
g^{s(r+1)} &=& \sum_{j=0}^r (-1)^j \binom{r+1}{j} \left( \sum_{i=1}^{k-1}r_iq_i \right)^{r+1-j} \left( \sum_{i=1}^{k-1}r_ih_i-r_k \right)^j f_k^j \\
&+& (-1)^{r+1} \left( \sum_{i=1}^{k-1}r_ih_i-r_k \right)^{r+1} f_k^{r+1} \in (q_1,\dots,q_{k-1}).
\end{eqnarray*}
Hence $g \in \sqrt{(q_1,\dots,q_{k-1})}$, as desired.
\end{proof}

Up to finding a syzygy with the required properties, we are able to decrease by one the number of generators of $I$ up to radical. We give a simple application of Lemma \ref{maral}.

\begin{exa}
Let $M=\begin{pmatrix}
0 &x_1&x_2&x_3\\
x_4&x_5&x_6&x_7\
\end{pmatrix}$ and $I=I_2(M)$ in the polynomial ring $R=K[x_1,\dots,x_7]$, where $K$ is a field of characteristic $0$. We prove that $\ara(I)=4$. By \cite[Remark 5.2]{LSW14}, we have $\cd_R(I)=4$. Then $\ara(I) \geq 4$. To prove the claim it suffices to find $4$ polynomials that generate $I$ up to radical. Recall that $[i,j]$ denotes the minor corresponding to the $i$-th and $j$-th columns of $M$. Then
\[
I=([1,2],[1,3],[2,3],[1,4],[2,4],[3,4]).
\]
Notice that $[x_2,-x_1]$ is a syzygy for $([1,2],[1,3])$ and $[2,3]= x_1x_6-x_2x_5 \in (x_2,-x_1)$. Following the proof of Lemma \ref{maral}, define $q_1=-x_5[2,3]+[1,2]$ and $q_2=-x_6[2,3]+[1,3]$. Then
\[
\sqrt{I}=\sqrt{(q_1,q_2,[1,4],[2,4],[3,4])}.
\]

By the Pl\"ucker relations (see \eqref{plucker})
\[
[1,4][2,3]-[2,4][1,3]+[3,4][1,2]=0
\]
we have that $[[2,3],-[1,3],[1,2]]$ is a syzygy for $([1,4],[2,4],[3,4])$. Notice that
\[
q_2=-x_6[2,3]-(-[1,3]) \in ([2,3],-[1,3],[1,2]).
\]

Again, following the proof of Lemma \ref{maral}, we define $p_1=-x_6q_2+[1,4], p_2=-q_2+[2,4], p_3=[3,4]$. Then $\sqrt{I}=\sqrt{q_1,p_1,p_2,p_3}$, and hence $\ara(I) \leq 4$.
\end{exa}

\section{Scroll blocks} \label{sectionscrolls}

In this section we assume that the Kronecker-Weierstrass decomposition of our matrix contains only scroll blocks. We fix an algebraically closed field $K$ and some integers $d \geq 2$ and $n_1,n_2,\dots,n_d>0$. We consider the matrix

\begin{equation}\label{scrollmatrix}
M=(B_{n_1}|\cdots|B_{n_d})= \left(\begin{array}{cccc|c|cccc}
x_{1,0} & x_{1,1} & \dots & x_{1,n_1-1}  & \dots   & x_{d,0} & x_{d,1} & \dots & x_{d,n_d-1} \\
x_{1,1} & x_{1,2} & \dots & x_{1,n_1}  & \dots & x_{d,1} & x_{d,2} & \dots & x_{d,n_d}
\end{array}\right),
\end{equation}
where $x_{i,j}$ are algebraically independent variables over $K$. We also denote by $N=\sum_{i=1}^dn_i+d-1$ the number of variables minus $1$ and by $I_{n_1,\dots,n_d}=I_2(M)$ the homogeneous ideal generated by the $2$-minors of the matrix $M$ in the polynomial ring $R=K[x_{i,j}]$. 
\par The projective variety $R_{n_1,\dots,n_d}=\mathrm{Proj}(R/I_{n_1,\dots,n_d})\subset\mathbb{P}^N_{K}$ associated to $I_{n_1,\dots,n_d}$ has dimension $d$ and is called $d$-dimensional \emph{rational normal scroll}. These varieties have been widely studied and many properties are known. In the following Proposition we collect a few facts that will be used later on.  For a proof and a survey on rational normal scrolls the reader may consult \cite[Chapter 2]{R97}.

\begin{prop} Let $d\geq2$, $n_1,\dots,n_d>0$ be integers and let $I_{n_1,\dots,n_d}$, $R$ and $R_{n_1,\dots,n_d}$ be as above. Then
\begin{enumerate}
\item $R_{n_1,\dots,n_d}$ is irreducible, i.e. $I_{n_1,\dots,n_d}$ is a prime ideal,
\item $R/I_{n_1,\dots,n_d}$ is a Cohen-Macaulay ring of dimension $d+1$,
\item $\mathrm{Pic}(R_{n_1,\dots,n_d})\cong \mathbb{Z}\oplus\mathbb{Z}$, where $\mathrm{Pic}(R_{n_1,\dots,n_d})$ is the Picard group of $R_{n_1,\dots,n_d}$.
\end{enumerate}
\end{prop}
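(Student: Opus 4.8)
The plan is to identify the scroll $R_{n_1,\dots,n_d}$ with a projectivized bundle over $\mathbb{P}^1$ and to read off all three assertions from that model. Set $\mathcal{E}=\mathcal{O}_{\mathbb{P}^1}(n_1)\oplus\cdots\oplus\mathcal{O}_{\mathbb{P}^1}(n_d)$, let $Y=\mathbb{P}(\mathcal{E})$ with structure morphism $\pi\colon Y\to\mathbb{P}^1$ and tautological line bundle $\mathcal{O}_Y(1)$, so that $\pi$ is a $\mathbb{P}^{d-1}$-bundle and $\dim Y=d$. Since every $n_i\geq 1$, the bundle $\mathcal{E}$ is globally generated and $\mathcal{O}_Y(1)$ is very ample; one has $H^0(Y,\mathcal{O}_Y(1))=H^0(\mathbb{P}^1,\mathcal{E})=\bigoplus_i H^0(\mathbb{P}^1,\mathcal{O}(n_i))$, a $K$-vector space of dimension $\sum_i(n_i+1)=N+1$ with natural basis the linear forms $x_{i,j}$. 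The first step is to check, on the standard affine charts of $Y$, that the morphism $Y\hookrightarrow\mathbb{P}^N$ attached to $|\mathcal{O}_Y(1)|$ has image exactly the variety $V(I_{n_1,\dots,n_d})$: the $2\times 2$ minors of the matrix \eqref{scrollmatrix} are precisely the quadratic relations forced by the fact that $\mathcal{O}_Y(1)$ restricts to $\mathcal{O}(1)$ on each $\mathbb{P}^{d-1}$-fibre of $\pi$ and to $\mathcal{O}(n_i)$ on the $i$-th section curve.

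For part (1): $Y$ is irreducible, being a $\mathbb{P}^{d-1}$-bundle over the irreducible base $\mathbb{P}^1$, so $V(I_{n_1,\dots,n_d})$ is an irreducible $d$-dimensional projective variety and $\sqrt{I_{n_1,\dots,n_d}}$ is prime. It then remains to see that $I_{n_1,\dots,n_d}$ is already radical; here I would either invoke that the scroll matrix $M$ is $1$-generic in the sense of Eisenbud, so that the ideal of its maximal minors is prime by Eisenbud's structure theorem for $1$-generic matrices, or, in a hands-on way, fix a diagonal term order, verify by Buchberger's criterion that the $2$-minors of $M$ form a Gr\"obner basis, observe that the corresponding initial ideal is squarefree (hence $I_{n_1,\dots,n_d}$ is radical), and conclude using the irreducibility just obtained.

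For part (2): I would prove that $Y\subset\mathbb{P}^N$ is arithmetically Cohen-Macaulay by computing the cohomology of its twisted structure sheaves along $\pi$. For $m\geq 0$ one has $\pi_*\mathcal{O}_Y(m)=\mathrm{Sym}^m\mathcal{E}$, a direct sum of line bundles of non-negative degree on $\mathbb{P}^1$; the higher direct images $R^j\pi_*\mathcal{O}_Y(m)$ vanish for $0<j<d-1$ and all $m$, while $R^{d-1}\pi_*\mathcal{O}_Y(m)$, which is nonzero only for $m\leq -d$, is a sum of line bundles of negative degree. Since $\mathbb{P}^1$ has cohomological dimension $1$, the Leray spectral sequence then yields $H^i(Y,\mathcal{O}_Y(m))=0$ for all $m\in\mathbb{Z}$ and all $1\leq i\leq d-1$. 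Combined with projective normality, which follows from the surjectivity of $\mathrm{Sym}^m H^0(\mathbb{P}^1,\mathcal{E})\to H^0(\mathbb{P}^1,\mathrm{Sym}^m\mathcal{E})$ for globally generated bundles on $\mathbb{P}^1$, this identifies $R/I_{n_1,\dots,n_d}$ with $\bigoplus_m H^0(Y,\mathcal{O}_Y(m))$ and forces its depth at the irrelevant ideal to equal $d+1$. As $\dim(R/I_{n_1,\dots,n_d})=\dim Y+1=d+1$ (in agreement with $\hgt(I_{n_1,\dots,n_d})=\sum_i n_i-1$ from Proposition \ref{abbasprop}), the ring $R/I_{n_1,\dots,n_d}$ is Cohen-Macaulay of dimension $d+1$. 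Eisenbud's theorem gives the same conclusion directly, since $1$-generic maximal-minor rings are Cohen-Macaulay.

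For part (3): since every $n_i\geq 1$, $Y=\mathbb{P}(\mathcal{E})$ is smooth and isomorphic as an abstract variety to $R_{n_1,\dots,n_d}$, hence $\mathrm{Pic}(R_{n_1,\dots,n_d})=\mathrm{Pic}(\mathbb{P}(\mathcal{E}))=\pi^*\mathrm{Pic}(\mathbb{P}^1)\oplus\mathbb{Z}\,[\mathcal{O}_Y(1)]\cong\mathbb{Z}\oplus\mathbb{Z}$ by the standard description of the Picard group of a projective bundle over a base with cyclic Picard group; the hypothesis $d\geq 2$ enters here, ensuring that the fibre class and the ruling class $[\mathcal{O}_Y(1)]$ are linearly independent. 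The main obstacle in this programme is part (1): once $I_{n_1,\dots,n_d}$ is known to be prime---equivalently radical, so that it is genuinely the homogeneous ideal of the scroll rather than merely an ideal with the correct radical---the rest is formal manipulation of projective-bundle cohomology and of Picard groups. The technical heart is therefore the $1$-genericity of the scroll matrix, or, if one prefers to stay within commutative algebra, the verification that its $2$-minors form a Gr\"obner basis with squarefree initial ideal.
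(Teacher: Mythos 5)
The paper does not actually prove this proposition: it simply collects the three facts and refers the reader to Reid's survey, \cite[Chapter 2]{R97}, for proofs. Your proposal fills this in with the standard argument via the projectivized bundle $Y=\mathbb{P}\bigl(\mathcal{O}_{\mathbb{P}^1}(n_1)\oplus\cdots\oplus\mathcal{O}_{\mathbb{P}^1}(n_d)\bigr)$, which is precisely the model one finds in the cited reference, so this is not a divergence so much as an honest unpacking of what the authors take for granted. The structure is sound: irreducibility from the bundle structure, Cohen--Macaulayness from the Leray spectral sequence and projective normality, and $\mathrm{Pic}\cong\mathbb{Z}^2$ from the standard description of Picard groups of projective bundles (with $d\geq 2$ entering exactly as you note). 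You correctly flag the real content, namely that $I_{n_1,\dots,n_d}$ is actually the full prime ideal of the embedded scroll and not merely an ideal with the right radical. Of your two suggested routes, the $1$-genericity argument (Eisenbud) is the cleanest and immediately gives both primeness and Cohen--Macaulayness without a separate Gr\"obner computation; if you instead take the Gr\"obner-basis route you would need to actually produce a term order and verify the squarefree initial ideal, which is a genuine (if classical) computation, not a one-liner. For purposes of the paper this is all left to the literature, so your proposal is best viewed as a correct elaboration of the standard proof rather than an alternative one.
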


\par In their paper \cite{BV10}, B\u{a}descu and Valla proved that $\ara(I_{n_1,\dots,n_d})=N-2$. They exhibit $N-2$ polynomials which generate the rational normal scroll set-theoretically and they use Grothendieck-Lefschetz theory to show that $\ara(I_{n_1,\dots,n_d})\geq N-2$. In particular, it turns out that $R_{n_1,\dots,n_d}$ is a set-theoretic complete intersection if and only if $d=2$ and, in this case, $\mathrm{ht}(I_{n_1,n_2})=\cd_R(I_{n_1,n_2})=\ara(I_{n_1,n_2})=n_1+n_2-1$.
\par The goal of this section is to compute the cohomological dimension of $I_{n_1,\dots,n_d}$. We are going to prove the following result.

\begin{thm}\label{scrolltheorem}
Let $K$ be an algebraically closed field, $d\geq2$ and $n_1,\dots,n_d>0$ integers, and $I_{n_1,\dots,n_d}=I_2(M)$ be the ideal generated by the $2$-minors of the matrix \eqref{scrollmatrix} in the polynomial ring $R=K[x_{i,j}]$ in $N+1$ variables.
Then
\begin{equation*}
\cd_R(I_{n_1,\dots,n_d})=
\begin{cases}
\mathrm{ht}(I_{n_1,\dots,n_d})=N-d=\displaystyle\sum_{i=1}^dn_i-1 &  \text{if } \chara(K)=p>0 \\
\ara(I_{n_1,\dots,n_d})=N-2=\displaystyle\sum_{i=1}^dn_i+d-3 & \text{if } \chara(K)=0
\end{cases}.
\end{equation*}
\end{thm}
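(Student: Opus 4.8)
The plan is to treat the two characteristic cases by quite different tools, both reducing to facts about the rational normal scroll $R_{n_1,\dots,n_d}\subset\mathbb{P}^N_K$.

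For $\chara(K)=p>0$, the idea is to exploit that $R/I_{n_1,\dots,n_d}$ is Cohen–Macaulay (hence $I_{n_1,\dots,n_d}$ is a perfect ideal of height $N-d=\sum n_i-1$) together with the Peskine–Szpiro vanishing theorem, exactly as in the generic case recalled in the Preliminaries: by \cite[Proposition 4.1]{PS73}, perfect ideals in a polynomial ring over a field of positive characteristic satisfy $\cd_R(I)=\hgt(I)$. Combined with the general inequality $\hgt(I)\le\cd(I)$ this gives $\cd_R(I_{n_1,\dots,n_d})=N-d$. The only thing to check here is that the scroll is indeed arithmetically Cohen–Macaulay, which is part (2) of the Proposition just quoted. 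So this half is essentially immediate from cited results.

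For $\chara(K)=0$, the upper bound $\cd_R(I_{n_1,\dots,n_d})\le\ara(I_{n_1,\dots,n_d})=N-2$ is the general inequality together with the Bădescu–Valla computation. The content is the lower bound $\cd_R(I_{n_1,\dots,n_d})\ge N-2$. The plan is to pass from local cohomology of $R$ at the cone to the cohomology of the complement $U=\mathbb{P}^N_K\setminus R_{n_1,\dots,n_d}$: via the exact sequence relating $H^i_I(R)$ and $H^{i-1}(U,\mathcal{O}_U)$ (in the graded setting, e.g. the Ogus/Hartshorne comparison, or the long exact sequence of local cohomology of the punctured cone), showing $\cd_R(I)\ge N-2$ amounts to showing $H^{N-2}_I(R)\ne 0$, which in turn follows from non-vanishing of $H^{N-3}$ of $U$ (or of an appropriate de Rham / coherent cohomology group of $U$). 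Here is where characteristic zero enters decisively and where one uses the Picard group computation $\mathrm{Pic}(R_{n_1,\dots,n_d})\cong\mathbb{Z}\oplus\mathbb{Z}$: by the Grothendieck–Lefschetz theory, the fact that $\mathrm{Pic}$ of the scroll has rank $2$ while $\mathrm{Pic}(\mathbb{P}^N)$ has rank $1$ forces the codimension of the scroll to be "small" in a cohomological sense, and more precisely one gets a lower bound on $\cd$ via the Hartshorne–Lichtenbaum type obstruction: a subvariety $Y\subset\mathbb{P}^N$ of dimension $d$ that is not a set-theoretic complete intersection (which holds here precisely when $d\ge 3$, again by Bădescu–Valla) must have $\cd(I_Y)>N-d-1$, and the refined statement coming from the rank-$2$ Picard group pushes this all the way up to $N-2$. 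I would carry this out by using the known structure of the scroll as a $\mathbb{P}^{d-1}$-bundle over $\mathbb{P}^1$: its de Rham/coherent cohomology is completely computable by the Leray spectral sequence, and then the comparison theorem of Ogus (in characteristic $0$) translating $\cd_R(I)$ into the first index where algebraic de Rham cohomology of the complement $U$ stops agreeing with that of $\mathbb{P}^N\setminus$ (a linear space of the same dimension) gives the exact value $N-2$.

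**Main obstacle.** The positive-characteristic case is routine given the cited theorems. The real work is the characteristic-zero lower bound: one must produce a nonzero local cohomology module in the critical degree $N-2$, and the honest way to do this is to translate into the cohomology of the open complement $U=\mathbb{P}^N\setminus R_{n_1,\dots,n_d}$ and compute far enough. The delicate point is bookkeeping the $\mathbb{P}^{d-1}$-bundle structure over $\mathbb{P}^1$ through the relevant spectral sequence and the Ogus-type comparison, keeping careful track of which cohomological degree corresponds to $\cd=N-2$ rather than merely $\cd\ge N-d$; equivalently, extracting from $\mathrm{Pic}\cong\mathbb{Z}\oplus\mathbb{Z}$ (and the failure of set-theoretic complete intersection for $d\ge 3$) the full strength $\cd= N-2$ rather than just the weaker Grothendieck–Lefschetz bound $\cd\ge N-2$. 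I expect the proof in the paper to funnel through a comparison with the cohomology of a complement of a linear subspace of dimension $d$, for which all groups are known, plus a careful degeneration-of-spectral-sequence argument; alternatively, via Lichtenbaum–style arguments using that the scroll contains a line (a ruling fiber) whose normal bundle has positive pieces, forcing extra local cohomology.
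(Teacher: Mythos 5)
Your positive-characteristic argument is exactly the paper's: the scroll is arithmetically Cohen--Macaulay, so $I_{n_1,\dots,n_d}$ is a perfect ideal, and Peskine--Szpiro gives $\cd_R(I)=\hgt(I)=N-d$. Nothing to add there.

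For $\chara(K)=0$ you correctly identify the two key ingredients --- $\mathrm{Pic}(Y)\cong\mathbb{Z}^2$ and a Lefschetz-type comparison between $Y=R_{n_1,\dots,n_d}$ and $\mathbb{P}^N$ --- but the step you call ``the refined statement coming from the rank-2 Picard group pushes this all the way up to $N-2$'' is precisely the content that must be proved, and you do not supply a mechanism for it. The passage you do assert, ``a subvariety of dimension $d$ that is not a set-theoretic complete intersection must have $\cd(I_Y)>N-d-1$,'' is just $\cd\geq\hgt$ and gives nothing new. Your proposed substitutes (computing algebraic de Rham cohomology of $Y$ via the $\mathbb{P}^{d-1}$-bundle structure and a Leray spectral sequence, then invoking Ogus, or a Lichtenbaum-style normal-bundle argument) are plausible but substantially heavier than needed, and you do not carry either to completion. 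The paper's mechanism is shorter and avoids any spectral-sequence computation: \emph{(i)} reduce to $K=\mathbb{C}$ by a faithfully flat base change argument on graded pieces of local cohomology (this reduction is implicit in your use of singular/de Rham cohomology but should be made explicit); \emph{(ii)} from the exponential sheaf sequence on $Y_{\mathrm{an}}$ and the fact that $H^1(Y,\mathcal{O}_Y)=H^2_{\mathfrak{m}}(R/I)_0=0$ (Cohen--Macaulayness, $\dim R/I=d+1\geq 3$), obtain an injection $\mathrm{Pic}(Y)\hookrightarrow H^2_{\mathrm{sing}}(Y_{\mathrm{an}},\mathbb{Z})$; \emph{(iii)} suppose for contradiction $\cd_R(I)<N-2$; then $\cd(\mathbb{P}^N\setminus Y)<N-3$, and Hartshorne's comparison theorem (\cite[Theorem 7.4, p.\ 148]{H70}, applied with $r=N-3$) forces $H^2_{\mathrm{sing}}(\mathbb{P}^N_{\mathrm{an}},\mathbb{C})\cong H^2_{\mathrm{sing}}(Y_{\mathrm{an}},\mathbb{C})$, hence $\mathrm{rank}_{\mathbb{Z}}H^2_{\mathrm{sing}}(Y_{\mathrm{an}},\mathbb{Z})\leq 1$, contradicting the injection of $\mathbb{Z}^2$ from \emph{(ii)}. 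In short: you have the right ingredients and the right overall shape (Pic obstruction plus a Lefschetz comparison), but the decisive reduction is not the cohomology of the bundle --- it is that only $H^2$ and only its rank are needed, and these are pinned down by the exponential sequence and Hartshorne's theorem with no further computation.
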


The proof of this theorem will use geometric tools. In fact, we will study the variety $R_{n_1,\dots,n_d}$ rather than the ideal $I_{n_1,\dots,n_d}$. We recall some Algebraic Geometry facts. When not explicitly stated, we refer to \cite{H77} and \cite[Chapter 20]{BS98} for proofs and further details.
\par Let $S=\bigoplus_{n\in\mathbb{N}}S_n$ be a positively graded ring where $S_0$ is a field and let $\mathfrak{m}=\bigoplus_{n>0}S_n$ its homogeneous maximal ideal. We consider a finitely generated graded $S$-module $N$ and the associated coherent sheaf $\mathcal{F}=\widetilde{N}$ on $X=\mathrm{Proj}(S)$. The Serre-Grothendieck Correspondence states that there are isomorphisms of $S_0$-modules between the sheaf cohomology modules and the local cohomology modules:
\begin{equation}\label{localsheaf}
H^i(X,\mathcal{F}(n))\cong H^{i+1}_{\mathfrak{m}}(N)_n,
\end{equation}
for all $i>0$ and $n\in\mathbb{Z}$.
\par The \emph{cohomological dimension of $X$} is defined as
\begin{equation*}
\cd(X)=\min\{n\in\mathbb{N}: \ H^i(X,\mathcal{F})=0 \text{ for every } i>n \text{ and } \mathcal{F} \text{ coherent sheaf over } X\}.
\end{equation*}

If $S_0$ is a field and $I$ is a homogeneous non-nilpotent ideal, then by a result of Hartshorne \cite{H68} we have
\begin{equation}\label{localsheafdim}
\cd_S(I)-1=\cd(\mathrm{Proj}(S)\setminus\mathrm{Proj}(S/I)).
\end{equation}

Thus, in order to bound $\cd_S(I)$, we can find bounds on $\cd(X\setminus Y)$, where $Y=\mathrm{Proj}(S/I)$.
\par When the base field $S_0$ is the field of complex numbers $\mathbb{C}$, we have a strong connection between the vanishing of the sheaf cohomology modules $H^i(X\setminus Y, -) $ and the singular cohomology groups $H^i_{\text{sing}}(X_{\text{an}},\mathbb{C})$ and $H^i_{\text{sing}}(Y_{\text{an}},\mathbb{C})$. Here $X_{\text{an}}$ and $Y_{\text{an}}$ denote $X$ and $Y$ regarded as topological spaces with the euclidean topology and are called analytification of $X$ and $Y$.

\begin{thm} {\bf (Hartshorne \cite[Theorem 7.4, p. 148]{H70})}\label{hartshornetheorem}
Let $X$ be a complete scheme of dimension $N$ over $\mathbb{C}$, $Y$ be a closed subscheme, and assume that $X\setminus Y$ is non-singular. Let $r$ be an integer. Then $\cd(X\setminus Y)<r$ implies that the natural maps
\begin{equation*}
H^i_{\text{sing}}(X_{\text{an}},\mathbb{C})\longrightarrow H^i_{\text{sing}}(Y_{\text{an}},\mathbb{C})
\end{equation*}
are isomorphisms for $i<N-r$, and injective for $i=N-r$.
\end{thm}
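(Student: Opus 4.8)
Set $U = X \setminus Y$ and $N = \dim X$; by hypothesis $U$ is non-singular, and we may harmlessly assume in addition that $X$ (hence $U$, if non-empty) is irreducible of dimension $N$, which is the case of interest, e.g. $X = \mathbb{P}^N$. The plan has three steps: reduce the assertion to a vanishing of the compactly supported singular cohomology of $U_{\text{an}}$; pass from there, via Poincaré duality on the manifold $U_{\text{an}}$, to a vanishing of ordinary singular cohomology of $U_{\text{an}}$ in large degrees; and deduce that last vanishing from $\cd(U) < r$ through the algebraic de Rham comparison theorem.

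\emph{Step 1.} Since $X$ is complete over $\mathbb{C}$, the analytification $X_{\text{an}}$ is a compact (triangulable) space and $Y_{\text{an}}$ a closed subspace, so the long exact sequence for cohomology with compact supports attached to the open immersion $U_{\text{an}} \hookrightarrow X_{\text{an}}$ with closed complement $Y_{\text{an}}$ reads
\[
\cdots \longrightarrow H^i_{c}(U_{\text{an}}, \mathbb{C}) \longrightarrow H^i_{\text{sing}}(X_{\text{an}}, \mathbb{C}) \longrightarrow H^i_{\text{sing}}(Y_{\text{an}}, \mathbb{C}) \longrightarrow H^{i+1}_{c}(U_{\text{an}}, \mathbb{C}) \longrightarrow \cdots,
\]
where we used compactness of $X_{\text{an}}$ and $Y_{\text{an}}$ to identify their compactly supported cohomology with ordinary cohomology. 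Hence it suffices to prove that $H^i_{c}(U_{\text{an}}, \mathbb{C}) = 0$ for every $i \le N - r$: then the two flanking terms in the sequence vanish when $i < N - r$, and the left one when $i = N - r$, so the middle map is an isomorphism for $i < N - r$ and injective for $i = N - r$.

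\emph{Steps 2 and 3.} As $U$ is non-singular of complex dimension $N$, $U_{\text{an}}$ is an oriented real $2N$-manifold, so Poincaré duality gives $H^i_{c}(U_{\text{an}}, \mathbb{C}) \cong H^{2N-i}_{\text{sing}}(U_{\text{an}}, \mathbb{C})^{\vee}$; thus the vanishing sought in Step 1 amounts to $H^j_{\text{sing}}(U_{\text{an}}, \mathbb{C}) = 0$ for all $j \ge N + r$. By Grothendieck's algebraic de Rham theorem, for the smooth variety $U$ over $\mathbb{C}$ one has $H^j_{\text{sing}}(U_{\text{an}}, \mathbb{C}) \cong \mathbb{H}^j(U, \Omega^{\bullet}_{U/\mathbb{C}})$. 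Feeding this into the spectral sequence of the stupid filtration, $E_1^{p,q} = H^q(U, \Omega^p_{U/\mathbb{C}}) \Rightarrow \mathbb{H}^{p+q}(U, \Omega^{\bullet}_{U/\mathbb{C}})$, one notes that $\Omega^p_{U/\mathbb{C}}$ is a coherent (indeed locally free) $\mathcal{O}_U$-module which vanishes for $p > N = \dim U$, while $\cd(U) < r$ forces $H^q(U, \Omega^p_{U/\mathbb{C}}) = 0$ for $q \ge r$; therefore $E_1^{p,q} = 0$ whenever $p + q \ge N + r$, and the abutment $\mathbb{H}^j(U, \Omega^{\bullet}_{U/\mathbb{C}})$ vanishes for all $j \ge N + r$. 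Tracing the equivalences back through Steps 2 and 1 completes the proof.

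The only substantial ingredient is Grothendieck's comparison theorem applied to the smooth variety $U$, which is in general neither affine nor proper; its proof passes through resolution of singularities over $\mathbb{C}$, reducing to a smooth projective variety minus a normal crossings divisor. Beyond that the argument is formal, but two points merit care: one must know $X_{\text{an}}$ and $Y_{\text{an}}$ are triangulable so that the compactly supported long exact sequence and Poincaré duality apply as stated, which is automatic for schemes of finite type over $\mathbb{C}$; and one should resist assuming $X$ itself smooth --- it may be singular precisely along $Y$, and only the open piece $U = X \setminus Y$ enters the argument as a manifold. This is essentially a streamlining over $\mathbb{C}$ of Hartshorne's original proof, which instead runs through the cohomology of the formal completion of $X$ along $Y$.
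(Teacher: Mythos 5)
This statement is quoted in the paper as an external result of Hartshorne (\cite[Theorem 7.4, p.~148]{H70}); the paper gives no proof of it, so there is nothing internal to compare against. Your argument is, however, a correct proof of the result in the form the paper actually uses it: the reduction via the compact-supports long exact sequence for $U_{\text{an}}\hookrightarrow X_{\text{an}}$, Poincar\'e duality on the smooth $2N$-manifold $U_{\text{an}}$, and the vanishing $H^j(U_{\text{an}},\mathbb{C})=\mathbb{H}^j(U,\Omega^\bullet_{U/\mathbb{C}})=0$ for $j\geq N+r$ coming from $\cd(U)<r$, $\Omega^p_U=0$ for $p>N$, and the Hodge--de Rham spectral sequence, is the standard route (and is in fact very close to Hartshorne's own argument, which also passes through algebraic de Rham cohomology of $U$ and duality, rather than only through formal completions as your closing remark suggests). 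The steps are assembled correctly: vanishing of $H^i_c(U_{\text{an}},\mathbb{C})$ for $i\leq N-r$ does give isomorphisms for $i<N-r$ and injectivity at $i=N-r$, and the coherence of $\Omega^p_U$ together with the definition of $\cd$ gives exactly the $E_1$-vanishing you need.

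One caveat deserves attention: the opening claim that one may ``harmlessly'' assume $X$ irreducible is not a genuine reduction of the stated theorem, and for a disconnected, non-equidimensional complete scheme the literal statement can fail (take $X=\mathbb{P}^2\sqcup\mathbb{P}^1$, $Y$ a point of $\mathbb{P}^2$, $r=2$: then $\cd(X\setminus Y)=1<r$ but $H^0(X_{\text{an}},\mathbb{C})\to H^0(Y_{\text{an}},\mathbb{C})$ is not injective, the compact component $\mathbb{P}^1$ contributing to $H^0_c(U_{\text{an}},\mathbb{C})$). So your hypothesis that $X$ is irreducible (equivalently, that $U$ is a connected manifold of pure dimension $N$) is where Poincar\'e duality in the form you use it really enters; it is entirely adequate for the paper's application, where $X=\mathbb{P}^N$ and $Y$ is the rational normal scroll, but you should state it as a hypothesis rather than as a harmless normalization.
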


The assumption $S_0=\mathbb{C}$ is not restrictive. In fact, the following Remark shows that we may assume it in many cases.

\begin{rem}\label{reductiontoc}
Let $K$ be a field of characteristic $0$, $R_K=K[x_1,\dots,x_n]$ the polynomial ring in $n$ variables over $K$ and $I$ an ideal of $R_K$. Since $R_K$ is Noetherian, $I$ is finitely generated, say $I=(f_1,\dots,f_m)$. The coefficients of the polynomials $f_i$ are elements of a finite extension of $\mathbb{Q}$, say $L$. We denote by $R_L=L[x_1,\dots,x_n]$ the corresponding polynomial ring. Notice that $L$ is a subfield of $K$ and a subfield of $\mathbb{C}$. We consider the ideal $I_L=I\cap R_L$, then $I=I_L R_K$ by construction. Set $R_{\mathbb{C}} = \mathbb{C}[x_1,\dots,x_n]$ and $I_{\mathbb{C}}=I_LR_{\mathbb{C}}$. We claim that
\begin{equation*}
\cd_{R_K}(I)=\cd_{R_{\mathbb{C}}}(I_{\mathbb{C}}).
\end{equation*}
Let $i$ and $j$ be integers, we look at the $j$-th graded piece of the local cohomology modules with support in $I$:
\begin{equation*}
H^i_I(R_K)_j=H^i_{I_L R_K}(R_L\otimes_L K)_j=H^i_{I_L}(R_L)_j\otimes_L K.
\end{equation*}
Since the field extension $L\subset K$ is faithfully flat, we have that $H^i_I(R_K)_j \neq 0$ if and only if $H^i_{I_L}(R_L)_j \neq 0$. In particular, $\cd_{R_K}(I)=\cd_{R_L}(I_L)$. The same argument applied to the ideals $I_L$ and $I_{\mathbb{C}}$ and to the faithfully flat field extension $L\subset\mathbb{C}$, yields $\cd_{R_L}(I_L)=\cd_{R_{\mathbb{C}}}(I_{\mathbb{C}})$, which proves the claim.
\end{rem}

We are now ready to prove Theorem \ref{scrolltheorem}.

\begin{proof}[Proof of Theorem \ref{scrolltheorem}]
For ease of notation, we set $I=I_{n_1,\dots,n_d}$ and $Y=R_{n_1,\dots,n_d}$.
\par If $K$ is a field of positive characteristic $p$, then the statement follows from \cite[Proposition 4.1, p. 110]{PS73}.

%

Now let $\chara(K)=0$. In light of Remark \ref{reductiontoc}, we may assume $K=\mathbb{C}$.
We know that $\cd_R(I)\leq\ara(I)$ and $\ara(I)=N-2$, so we need to prove that $\cd_R(I)\geq N-2$.
\par  We consider the exponential sequence of sheaves over $Y_{\text{an}}$, the analytification of $Y$:
\begin{equation}\label{expshort}
0\rightarrow \underline{\mathbb{Z}}\rightarrow \mathcal{O}_{Y_{\text{an}}}\rightarrow\mathcal{O}_{Y_{\text{an}}}^{*}\rightarrow0,
\end{equation}
where $\underline{\mathbb{Z}}$ denotes the constant sheaf and the map $\mathcal{O}_{Y_{\text{an}}}\rightarrow\mathcal{O}_{Y_{\text{an}}}^{*}$ is given by $f\mapsto \exp(2\pi i f)$.
The sequence \eqref{expshort} induces a long exact sequence of sheaf cohomology modules, in particular we have
\begin{equation}\label{explong}
\cdots\rightarrow H^1(Y_{\text{an}},\mathcal{O}_{Y_{\text{an}}})\rightarrow H^1(Y_{\text{an}},\mathcal{O}^{*}_{Y_{\text{an}}})\rightarrow H^2(Y_{\text{an}},\underline{\mathbb{Z}})\rightarrow H^2(Y_{\text{an}},\mathcal{O}_{Y_{\text{an}}}) \rightarrow \cdots.
\end{equation}
By definition $H^1(Y_{\text{an}},\mathcal{O}^{*}_{Y_{\text{an}}})=\mathrm{Pic}(Y)$ and, since $\underline{\mathbb{Z}}$ is a constant sheaf, it follows that $H^2(Y_{\text{an}},\underline{\mathbb{Z}})=H^2_{\text{sing}}(Y_{\text{an}},\mathbb{Z})$.
An application of the GAGA principle and \eqref{localsheaf} yield $H^1(Y_{\text{an}},\mathcal{O}_{Y_{\text{an}}}) = H^1(Y,\mathcal{O}_{Y}) = H^2_{\mathfrak{m}}(R/I)_0$, where $\mathfrak{m}$ is the homogeneous maximal ideal of $R$. Since $R/I$ is a Cohen-Macaulay ring of dimension $d+1\geq3$ we have that $H^2_{\mathfrak{m}}(R/I)_0=0$, therefore \eqref{explong} yields the group injection
\begin{equation}\label{picequivalence}
\mathrm{Pic}(Y)\hookrightarrow H^2_{\text{sing}}(Y_{\text{an}},\mathbb{Z}).
\end{equation}

Now we assume that $\cd_R(I)<N-2$ and proceed by contradiction. From \eqref{localsheafdim} it follows that
\begin{equation*}
\cd(\mathbb{P}^N\setminus Y)=\cd_R(I)-1<N-2-1=N-3.
\end{equation*}

Theorem \ref{hartshornetheorem} with $r=N-3$ yields
\begin{equation*}
H^i_{\text{sing}}(\mathbb{P}^N_{\text{an}},\mathbb{C})\cong H^i_{\text{sing}}(Y_{\text{an}},\mathbb{C}) \text{ for } i<3,
\end{equation*}
which implies $\dim_{\mathbb{C}}H^i_{\text{sing}}(\mathbb{P}^N_{\text{an}},\mathbb{C}) = \dim_{\mathbb{C}} H^i_{\text{sing}}(Y_{\text{an}},\mathbb{C})$. By the Universal Coefficients Theorem, this is equivalent to
\begin{equation*}
\mathrm{rank}_{\mathbb{Z}}H^i_{\text{sing}}(\mathbb{P}^N_{\text{an}},\mathbb{Z})=\mathrm{rank}_{\mathbb{Z}}H^i_{\text{sing}}(Y_{\text{an}},\mathbb{Z}).
\end{equation*}

It is well known that
\begin{equation*}
H^i_{\text{sing}}(\mathbb{P}^N_{\text{an}},\mathbb{Z})=
\begin{cases} \mathbb{Z} & \text{if } i \text{ even}, 0 \leq i \leq 2N \\
0 & \text{otherwise}
\end{cases}.
\end{equation*}

In particular, $\mathrm{rank}_{\mathbb{Z}} H^i_{\text{sing}}(Y_{\text{an}},\mathbb{Z}) \leq 1$. On the other hand, $\mathrm{Pic}(Y)=\mathbb{Z}^2$, which contradicts \eqref{picequivalence}.
\end{proof}

From Theorem \ref{scrolltheorem} and Proposition \ref{nilpotentiprop} we immediately deduce

\begin{cor}
Let $K$ be an algebraically closed field of characteristic $0$, let $R$ be a polynomial ring over $K$ and $M$ be a $(2\times n)$-matrix of linear forms over $R$. If the Kronecker-Weierstrass decomposition of $M$ is
\begin{equation*}
(B_{n_1}|\cdots|B_{n_d}|N_{m_1}|\cdots|N_{m_c})
\end{equation*}
for some integers $d\geq2$, $c\geq0$, $n_1,\dots,n_d>0$ and $m_1,\dots,m_c\geq0$, then

\begin{equation*}
\cd_R\big(I_2(M)\big)=\ara\big(I_2(M)\big)=\sum_{i=1}^dn_i+\sum_{j=1}^cm_j+d-3.
\end{equation*}
\end{cor}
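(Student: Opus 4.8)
The plan is to deduce the Corollary directly from Theorem \ref{scrolltheorem} and Proposition \ref{nilpotentiprop} by induction on the number $c$ of nilpotent blocks. First I would record that, since the Kronecker-Weierstrass form is obtained from $M$ by invertible row and column operations, $I_2(M)$ is (up to the obvious change of variables) the ideal $I_2(X)$ attached to the displayed concatenation $X=(B_{n_1}|\cdots|B_{n_d}|N_{m_1}|\cdots|N_{m_c})$, so it is enough to compute $\cd$ and $\ara$ for this $X$.

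For the base case $c=0$ the matrix is $(B_{n_1}|\cdots|B_{n_d})$, and Theorem \ref{scrolltheorem} in characteristic $0$ gives $\cd_R(I_2(X))=\ara(I_2(X))=N-2=\sum_{i=1}^d n_i+d-3$, which is exactly the claimed value when $c=0$ (the empty sum over $j$ being $0$). For the inductive step, suppose the formula holds for a concatenation $X'$ of $d$ scroll blocks and $c-1$ nilpotent blocks, so in particular $\cd(I_2(X'))=\ara(I_2(X'))$. Writing $X=(X'|N_{m_c})$ and applying Proposition \ref{nilpotentiprop} with $J=I_2(X')$ and the nilpotent block $N_{m_c}$ (of length $m_c+1$), I get $\cd_S(I_2(X))=\cd(I_2(X'))+m_c$ and $\ara(I_2(X))\le\ara(I_2(X'))+m_c$. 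Since $\cd\le\ara$ always, the inductive hypothesis $\cd(I_2(X'))=\ara(I_2(X'))$ forces $\cd(I_2(X))=\ara(I_2(X))=\ara(I_2(X'))+m_c=\bigl(\sum_{i=1}^d n_i+\sum_{j=1}^{c-1}m_j+d-3\bigr)+m_c$, which is the asserted formula for $c$ blocks. This completes the induction.

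There is essentially no hard step here: the content is entirely carried by Theorem \ref{scrolltheorem} (the characteristic-zero equality $\cd=\ara$ for rational normal scrolls) and by the sandwiching argument in Proposition \ref{nilpotentiprop}, which propagates the equality $\cd=\ara$ along concatenations with nilpotent blocks. The only point deserving a word of care is the bookkeeping of the nilpotent block lengths: Proposition \ref{nilpotentiprop} is stated for a block $N_n$ of length $n+1$ and contributes $+n$, so when the block is called $N_{m_c}$ and has length $m_c+1$, it contributes $+m_c$, in agreement with the $\sum_{j=1}^c m_j$ in the statement (and with Proposition \ref{abbasprop}(2), which gives $\hgt(I_2(X))=\sum_{i=1}^d n_i+\sum_{j=1}^c m_j+d-1$ only when $c\ge 1$ scroll-type counting is used; here one checks the height plays no role since we are in characteristic $0$). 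One should also note that $N_{m_j}$ with $m_j=0$ is the zero column $N_0$, which adds nothing to the ideal and contributes $0$, consistent with the formula; so the cases $m_j\ge 0$ are all covered uniformly.
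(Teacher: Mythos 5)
Your proof is correct and is exactly the route the paper intends: the base case $c=0$ comes from Theorem \ref{scrolltheorem}, and the inductive step uses Proposition \ref{nilpotentiprop} together with the sandwich $\cd\le\ara$ to propagate the equality when a nilpotent block is concatenated. One minor slip in your parenthetical: Proposition \ref{abbasprop}(2) in the Corollary's notation gives $\hgt(I_2(X))=\sum_{i=1}^d n_i+\sum_{j=1}^c m_j-1$; the quantity $\sum n_i+\sum m_j+d-1$ you quote is the number of variables minus one, not the height, but as you note the height plays no role in the argument anyway.
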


\section{Jordan blocks} \label{sectionjordan}

Let $K$ be a field of characteristic zero, $d \geq 1$ and $\alpha_i \geq 1$ for $i=1,\dots,d$. We consider the following $(2 \times n)$-matrix $M$ consisting of $\alpha_i$ Jordan blocks with eigenvalue $\lambda_i$ for $i=1,\dots,d$, such that $\alpha_i \geq \alpha_j$ if $j>i$:
\begin{equation}\label{jordanconcatenation}
M = \Big( J^1_{\lambda_1,m_{11}} \big| J^2_{\lambda_1,m_{12}} \big| \cdots \big| J^{\alpha_1}_{\lambda_1,m_{1\alpha_1}} \big| J^1_{\lambda_2,m_{21}} \big| J^2_{\lambda_2,m_{22}} \big| \cdots \big| J^{\alpha_2}_{\lambda_2,m_{2\alpha_2}} \big| \cdots \big| J^1_{\lambda_d,m_{d1}} \big| J^2_{\lambda_d,m_{d2}} \big| \cdots \big| J^{\alpha_d}_{\lambda_d,m_{d\alpha_d}} \Big).
\end{equation}

Here we use the following notation for the Jordan blocks, for $j=1,\dots,d$ and $i=1,\dots,\alpha_j$:
\[
J^i_{\lambda_j,m_{ji}}=
\begin{pmatrix}
y^i_{j,1} & y^i_{j,2} & \cdots & y^i_{j,m_{ji}} \\
\lambda_j y^i_{j,1} & y^i_{j,1} + \lambda_j y^i_{j,2} & \cdots & y^i_{j,m_{ji}-1} + \lambda_j y^i_{j,m_{ji}}
\end{pmatrix},
\]
where $m_{ji}$ is the length of the block.

Consider the ideal $I_2(M)$ in the polynomial ring $R=K[y^i_{j,h} : 1 \leq j \leq d, 1 \leq i \leq \alpha_j, 1 \leq h \leq m_{ji}]$. Let $\alpha = \sum_{i=1}^d \alpha_i$ be the number of blocks in $M$ and $N = \sum_{\substack{1 \leq j \leq d \\ 1 \leq i \leq \alpha_j}} m_{ji}$ be the number of variables in $R$.

The following Theorem shows that, even though the height of $I_2(M)$ depends on the maximum number of blocks with the same eigenvalue, the cohomological dimension equals the arithmetical rank of $I_2(M)$ and they are independent on how many blocks have the same eigenvalue.

\begin{thm} \label{jordantheorem}
Let $K$ be a field of characteristic zero and $M$ be a matrix of the form \eqref{jordanconcatenation}. Then
\[
\cd(I_2(M)) = \ara(I_2(M)) =
\begin{cases}
N-\alpha & \text{if } d=1\\
N-1 & \text{if } d>1
\end{cases}.
\]
\end{thm}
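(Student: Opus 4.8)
The plan is to pin down the radical $\sqrt{I_2(M)}$ completely and then read off both $\cd$ and $\ara$ from it. I begin with the invertible row operation that subtracts $\lambda_1$ times the first row of $M$ from the second: it does not change $I_2(M)$, and it replaces each block $J^i_{\lambda_j,m_{ji}}$ by the matrix whose $(2,h)$-entry is $y^i_{j,h-1}+(\lambda_j-\lambda_1)y^i_{j,h}$ (with the convention $y^i_{j,0}=0$). Call the resulting matrix $M'$ and set $\mu_j:=\lambda_j-\lambda_1$, so $\mu_1=0$ and $\mu_j\neq0$ for $j\geq2$. Write $v^i_j:=y^i_{j,m_{ji}}$ for the last variable of the $i$-th block with eigenvalue $\lambda_j$, let $L$ be the ideal of $R$ generated by the remaining $N-\alpha$ variables $y^i_{j,h}$ with $h<m_{ji}$, let $S=K[v^i_j]\subseteq R$, and let $I'\subseteq S$ be the squarefree monomial ideal generated by the products $v^i_j v^{i'}_{j'}$ with $j\neq j'$. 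The key claim is that $\sqrt{I_2(M)}=L+I'$.

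For the inclusion $L+I'\subseteq\sqrt{I_2(M)}$: the $2$-minor of $M'$ on two consecutive columns of a single block is $(y^i_{j,h})^2-y^i_{j,h-1}y^i_{j,h+1}$, so an induction on $h$ (with base case $h=1$, where the minor is $(y^i_{j,1})^2$) shows $y^i_{j,h}\in\sqrt{I_2(M)}$ for every $h<m_{ji}$, hence $L\subseteq\sqrt{I_2(M)}$; moreover the $2$-minor of $M'$ on the last columns of two blocks with $\lambda_j\neq\lambda_{j'}$ is congruent modulo $L$ to $(\lambda_{j'}-\lambda_j)v^i_j v^{i'}_{j'}$, so every generator of $I'$ lies in $\sqrt{I_2(M)}$ as well. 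For the reverse inclusion: reducing $M'$ modulo $L$ leaves the $2\times\alpha$ matrix with columns $v^i_j(1,\mu_j)^{\mathrm T}$ (and some zero columns), whose $2$-minors are exactly the elements $(\mu_{j'}-\mu_j)v^i_j v^{i'}_{j'}$; hence $I_2(M)\subseteq L+I'$, and $L+I'$ is radical since $R/(L+I')\cong S/I'$ is reduced. When $d=1$ no two blocks have distinct eigenvalues, so $I'=(0)$ and $\sqrt{I_2(M)}=L$ is a prime ideal generated by $N-\alpha$ variables; thus $\hgt(I_2(M))=\cd(I_2(M))=\ara(I_2(M))=N-\alpha$, which is the first case of the theorem.

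Now assume $d\geq2$. Since the variables generating $L$ are disjoint from those occurring in $I'$, Lemma \ref{matte} gives $\cd(I_2(M))=\cd_S(I')+(N-\alpha)$, and $\cd_S(I')=\pd_S(S/I')$ by \cite[Theorem~1]{L83} because $I'$ is a squarefree monomial ideal. The Stanley--Reisner complex of $S/I'$ is the disjoint union of the $d$ full simplices on the eigenvalue classes, hence is disconnected; therefore $\mathrm{depth}(S/I')=1$, and the Auslander--Buchsbaum formula yields $\pd_S(S/I')=\dim S-1=\alpha-1$. Thus $\cd(I_2(M))=N-1$. On the other hand $\ara(I_2(M))=\ara(L+I')\leq(N-\alpha)+\ara_S(I')$, and $\ara_S(I')\leq\alpha-1$ follows from the Schmitt--Vogel Lemma \ref{SchmittVogel} applied to the monomial generators of $I'$, with a filtration $P_0,\dots,P_{\alpha-2}$ whose first piece $P_0$ is a single generator and in which condition (iii) is arranged by dividing each product $pp''$ by a suitable monomial from an earlier piece. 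Combined with $\cd(I_2(M))\leq\ara(I_2(M))$, this forces $\cd(I_2(M))=\ara(I_2(M))=N-1$.

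The identification of $\sqrt{I_2(M)}$, the reduction of $M'$ modulo $L$, and the $\cd$ estimate are routine; the genuinely delicate point is the Schmitt--Vogel step for $d\geq2$, because a single global ordering of the $\alpha$ vertices of a complete $d$-partite graph need not produce a filtration with a one-element bottom and the required divisibilities as soon as two parts have size bigger than one, so the partition must be chosen with some care (one could instead invoke the known value of the arithmetical rank of the edge ideal of a complete multipartite graph). I finally observe that the argument above nowhere uses $\chara(K)=0$, so the statement in fact holds over an arbitrary field.
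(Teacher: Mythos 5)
Your identification of $\sqrt{I_2(M)}=L+I'$, the $d=1$ case, and the cohomological dimension computation are all correct, and your route through Reisner's criterion (the Stanley--Reisner complex of $I'$ is the disjoint union of $d\ge 2$ full simplices, hence disconnected, hence $\operatorname{depth}(S/I')=1$, and Auslander--Buchsbaum gives $\pd_S(S/I')=\alpha-1$) is a pleasant, self-contained replacement for the paper's citation of Jacques' thesis for that projective dimension. Your closing observation that the hypothesis $\chara(K)=0$ is never used is also correct: both your argument and the paper's are characteristic-free, since Lyubeznik's equality $\cd=\pd$ for squarefree monomial ideals and the relevant Betti numbers are independent of $\chara(K)$, and since $L_M$ is not a perfect ideal when $\alpha_1>1$, so the usual Peskine--Szpiro mechanism forcing $\cd=\hgt$ in positive characteristic simply does not apply here.

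The genuine gap is exactly the one you flag yourself: the upper bound $\ara_S(I')\le\alpha-1$. You assert that a Schmitt--Vogel filtration $P_0,\dots,P_{\alpha-2}$ exists and that condition (iii) can ``be arranged by dividing each product $pp''$ by a suitable monomial from an earlier piece,'' but you do not construct it, and your own remark shows why the obvious attempt fails: grading the edges of $K_{\alpha_1,\dots,\alpha_d}$ by the sum of vertex positions in a single global order of all $\alpha$ vertices produces strictly more than $\alpha-1$ nonempty levels as soon as two parts have size at least $2$, so naive Schmitt--Vogel only yields a weaker bound. The paper closes this with an explicit two-dimensional arrangement: it forms a rectangular array $Q$ whose columns are indexed by the pairs $(j,i)$ with $1\le j\le d-1$ in \emph{increasing} order of $j$, whose rows are indexed by the pairs $(k,h)$ with $2\le k\le d$ in \emph{decreasing} order of $k$, and whose entry at $(k,h),(j,i)$ is $y^i_jy^h_k$ if $j<k$ and zero otherwise; the sets $P_\ell$ are then the antidiagonals of $Q$. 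The opposite orderings on rows and columns push the zero block to the lower-right corner, which is precisely what makes all antidiagonals beyond the $(\alpha-1)$-st empty, and the rectangular structure verifies Schmitt--Vogel (iii): for two entries on the same antidiagonal, the entry sitting in the column of the first and the row of the second is a generator on a strictly earlier antidiagonal and divides their product. Your alternative, to ``invoke the known value of the arithmetical rank of the edge ideal of a complete multipartite graph,'' is not backed by a reference, and as far as I can tell that value is established here by exactly this construction. Without the array (or some equivalent device), the inequality $\ara(I_2(M))\le N-1$ is unproved, and the theorem is not yet established.
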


\begin{proof}
First we observe that
\begin{equation} \label{eqJordan}
\sqrt{I_2(M)} = J + L_M,
\end{equation}
where $J$ is the ideal generated by all the $N-\alpha$ variables $y^i_{j,h}$, for every $j=1,\dots,d$, $i=1,\dots,\alpha_j$ and $h=1,\dots,m_{ji}-1$. To describe the ideal $L_M$ first we simplify the notation: we denote the last variable $y^i_{j,m_{ji}}$ of each block by $y^i_j$. Then $L_M$ is the squarefree monomial ideal generated by the quadratics monomials of the form $y^i_j y^\ell_k$, for $j \neq k$, $1 \leq j,k \leq d$, $1 \leq i \leq \alpha_j$ and $1 \leq \ell \leq \alpha_k$. Notice that $L_M$ is an ideal in the ring $S=K[y^i_j : 1 \leq j \leq d, 1 \leq i \leq \alpha_j]$. The equality \eqref{eqJordan} holds because if we consider a minor involving at most one of the last columns of the blocks, then it is a multiple of some $y^i_{j,h} \in J$; otherwise if the minor involves the last columns of two blocks, then it is a multiple of some monomial $y^i_j y^\ell_k \in L_M$. This implies that $I_2(M) \subset J + L_M$, hence $\sqrt{I_2(M)} \subset J + L_M$, since $J+L_M$ is a radical ideal. Vice versa, first we show that $J \subset \sqrt{I_2(M)}$. We fix a block $J^i_{\lambda_j,m_{ji}}$ and we prove that $y^i_{j,h} \in \sqrt{I_2(M)}$ by induction on $h \geq 1$. For $h=1$, $\left( y^i_{j,1} \right)^2 = y^i_{j,1}(y^i_{j,1} + \lambda_j y^i_{j,2}) - \lambda_j y^i_{j,1} y^i_{j,2} \in I_2(M)$ since it is the minor corresponding to the first two columns of the block $J^i_{\lambda_j,m_{ji}}$. Suppose that $h>1$ and $y^i_{j,k} \in \sqrt{I_2(M)}$ for $1 \leq k < h$. Then $\left( y^i_{j,h} \right)^2 = \left( y^i_{j,h} \right)^2 - y^i_{j,h-1} y^i_{j,h+1} + y^i_{j,h-1} y^i_{j,h+1} \in \sqrt{I_2(M)}$, since $\left( y^i_{j,h} \right)^2 - y^i_{j,h-1} y^i_{j,h+1} \in I_2(M)$ is the minor corresponding to the columns $h$ and $h+1$ and $y^i_{j,h-1} y^i_{j,h+1} \in \sqrt{I_2(M)}$ by induction hypothesis. Now we prove that $L_M \subset \sqrt{I_2(M)}$. Notice that
\begin{eqnarray*}
(\lambda_k \!-\! \lambda_j) y^i_j y^\ell_k &\!=\!& (\lambda_k \!-\! \lambda_j) y^i_{j,m_{ji}} y^\ell_{k,m_{k\ell}} \\
&\!=\!& \begin{vmatrix}
y^i_{j,m_{ji}} & y^\ell_{k,m_{k\ell}} \\
y^i_{j,m_{ji}\!-\!1} + \lambda_j y^i_{j,m_{ji}} & y^\ell_{k,m_{k\ell}\!-\!1} + \lambda_k y^\ell_{k,m_{k\ell}}
\end{vmatrix}
\!-\! \big(y^i_{j,m_{ji}} y^\ell_{k,m_{k\ell}\!-\!1} \!-\! y^i_{j,m_{ji}\!-\!1} y^\ell_{k,m_{k\ell}} \big) \in \sqrt{I_2(M)},
\end{eqnarray*}
since $y^\ell_{k,m_{k\ell}-1}, y^i_{j,m_{ji}-1} \in J \subset \sqrt{I_2(M)}$. This yields the equality \eqref{eqJordan}.

If $d=1$, all the blocks have the same eigenvalue $\lambda_1$. Hence $L_M=(0)$ and $\sqrt{I_2(M)}=J$. This implies that $\cd(I_2(M))=\ara(I_2(M))=N-\alpha$.

Let $d \geq 2$. Notice that $L_M$ is the edge ideal of a complete $d$-partite graph $K_{\alpha_1,\alpha_2,\dots,\alpha_d}$. By \cite[Theorem 4.2.6]{J04}, we have $\cd(L_M)=\pd_S(S/L_M)=\alpha-1$. Then $\cd(I_2(M))=\cd(J)+\cd(L_M)=N-\alpha+\alpha-1=N-1$ by Proposition \ref{nilpotentiprop}.

Now we show that $\ara(I_2(M)) \leq N-1$. In light of Example \ref{purecase} iii), $\ara \big( I_2 \big(J^i_{\lambda_j,m_{ji}} \big) \big) = m_{ji}-1$ and $I_2 \big( J^i_{\lambda_j,m_{ji}} \big)$ is generated by the variables $y^i_{j,1}, y^i_{j,2}, \dots, y^i_{j,m_{ji}-1}$ up to radical.

Since $J$ is generated by $N-\alpha$ variables, in order to prove the claim, it suffices to show that $L_M$ is generated by $\alpha-1$ polynomials up to radical. We construct the following matrix with $\sum_{i=2}^d \alpha_i$ rows and $\sum_{i=1}^{d-1} \alpha_i$ columns:
\renewcommand{\arraystretch}{0.8}
\begin{small}
\begin{equation*} Q=
\left(\begin{array}{cccc|ccc|c|ccc}
y^1_1 y^1_d & y^2_1 y^1_d & \cdots & y^{\alpha_1}_1 y^1_d & y^1_2 y^1_d & \cdots & y^{\alpha_2}_2 y^1_d & \cdots & y^1_{d-1} y^1_d & \cdots & y^{\alpha_{d-1}}_{d-1} y^1_d \\
y^1_1 y^2_d & y^2_1 y^2_d & \cdots & y^{\alpha_1}_1 y^2_d & y^1_2 y^2_d & \cdots & y^{\alpha_2}_2 y^2_d & \cdots & y^1_{d-1} y^2_d & \cdots & y^{\alpha_{d-1}}_{d-1} y^2_d \\
\vdots & \vdots & \ddots & \vdots & \vdots & \ddots & \vdots & \vdots & \vdots & \ddots & \vdots \\
y^1_1 y^{\alpha_d}_d & y^2_1 y^{\alpha_d}_d & \cdots & y^{\alpha_1}_1 y^{\alpha_d}_d & y^1_2 y^{\alpha_d}_d & \cdots & y^{\alpha_2}_2 y^{\alpha_d}_d & \cdots & y^1_{d-1} y^{\alpha_d}_d & \cdots & y^{\alpha_{d-1}}_{d-1} y^{\alpha_d}_d\\
y^1_1 y^1_{d-1} & y^2_1 y^1_{d-1} & \cdots & y^{\alpha_1}_1 y^1_{d-1} & y^1_2 y^1_{d-1} & \vdots & y^{\alpha_2}_2 y^1_{d-1} & & & & \\
\vdots & \vdots & \ddots & \vdots & \vdots & \ddots & \vdots & & & & \\
y^1_1 y^{\alpha_3}_3 & y^2_1 y^{\alpha_3}_3 & \cdots & y^{\alpha_1}_1 y^{\alpha_3}_3 & y^1_2 y^{\alpha_3}_3 & \cdots & y^{\alpha_2}_2 y^{\alpha_3}_3 & & & & \\
y^1_1 y^1_2 & y^2_1 y^1_2 & \cdots & y^{\alpha_1}_1 y^1_2 & & & & & & & \\
\vdots & \vdots & \ddots & \vdots & & & & & & & \\
y^1_1 y^{\alpha_2}_2 & y^2_1 y^{\alpha_2}_2 & \cdots & y^{\alpha_1}_1 y^{\alpha_2}_2 & & & & & & & \\
\end{array}\right),
\end{equation*}
\end{small}
The first block of $Q$ is obtained by multiplying the variables $y^i_1$ by $y^h_j$ for $2 \leq j \leq d$ and $1 \leq h \leq \alpha_j$; the second block is obtained by multiplying the variables $y^i_2$ by $y^h_j$ for $3 \leq j \leq d$ and $1 \leq h \leq \alpha_j$ and so on.

Let $T$ to be the set of all the entries of $Q$, that are the generators of $L_M$. For every $\ell=1,\dots,\alpha-1$, we define $T_\ell$ as the set of all the monomials of the $\ell$-th antidiagonal of $Q$ and $q_\ell$ as the sum of these monomials. In particular, $T_1 = \{y^1_1 y^1_d\}$ and $T=\bigcup_{\ell=1}^{\alpha-1} T_\ell$. To show the last equality, we count the number of nonzero antidiagonals of $Q$. Every element on the first row is contained in exactly one $T_\ell$, hence we have $\sum_{i=1}^{d-1} \alpha_i$ sets. Moreover, every nonzero element in the last column is contained in exactly one $T_\ell$, thus we have $\alpha_d$ sets. In total we have $\sum_{i=1}^d \alpha_i - 1$ sets, since the element $y^{\alpha_{d-1}}_{d-1} y^1_d$ has been counted twice. All the other antidiagonals of $Q$ are zero because the elements of the form $y^{\alpha_j}_j y^{\alpha_h}_h$ belong to the $(\alpha-1)$-th antidiagonal. This shows that the first two conditions of Lemma \ref{SchmittVogel} are fulfilled.

As for the third condition, if we pick two monomials on the $\ell$-th antidiagonal of $Q$, they have the form $y^{i_1}_{j_1} y^{i_2}_{j_2}$ and $y^{h_1}_{k_1} y^{h_2}_{k_2}$. We may assume that either $j_1 < k_1$ or ($j_1 = k_1$ and $i_1 < h_1$). Hence their product $y^{i_1}_{j_1} y^{i_2}_{j_2} \cdot y^{h_1}_{k_1} y^{h_2}_{k_2}$ is a multiple of $y^{i_1}_{j_1} y^{h_2}_{k_2}$ that belongs to the $m$-th antidiagonal, for some $1 \leq m < \ell$ (this element is placed in the intersection of the column containing $y^{i_1}_{j_1} y^{i_2}_{j_2}$ and the row containing $y^{h_1}_{k_1} y^{h_2}_{k_2}$). From Lemma \ref{SchmittVogel} it follows that $L_M = \sqrt{L_M} = \sqrt{(q_1,\dots,q_{\alpha-1})}$ and thus $\ara(L_M) \leq \alpha-1$. Therefore
\[
\ara(I_2(M)) \leq \ara(J) + \ara(L_M) \leq N-\alpha+\alpha-1 = N-1.
\]
\end{proof}

From Theorem \ref{jordantheorem} and Proposition \ref{nilpotentiprop} we deduce

\begin{cor}
Let $K$ be a field of characteristic $0$, let $R$ be a polynomial ring over $K$ and $M'$ be a $(2\times n)$-matrix of linear forms over $R$. Suppose that the Kronecker-Weierstrass decomposition of $M'$ is
\begin{equation*}
(M|N_{m_1}|\cdots|N_{m_c})
\end{equation*}
for some integers $d\geq1$, $c\geq0$, $\alpha_1,\dots,\alpha_d \geq 1$ and $m_1,\dots,m_c\geq0$, and where $M$ is the matrix \eqref{jordanconcatenation}. Then
\begin{equation*}
\cd \big(I_2(M')\big) = \ara\big(I_2(M')\big) =
\begin{cases}
N - \alpha + \sum_{k=1}^c m_k & \text{if } d=1\\
N - 1 + \sum_{k=1}^c m_k & \text{if } d > 1
\end{cases}.
\end{equation*}
\end{cor}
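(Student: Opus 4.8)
The plan is to deduce this from Theorem~\ref{jordantheorem} by stripping off the nilpotent blocks one at a time, following the same pattern used for the corollary after Theorem~\ref{scrolltheorem}. I would argue by induction on the number $c$ of nilpotent blocks. In the base case $c=0$ the matrix $M'$ is exactly the matrix $M$ of \eqref{jordanconcatenation}, so the two equalities
\[
\cd\big(I_2(M')\big)=\ara\big(I_2(M')\big)=\begin{cases}N-\alpha & \text{if }d=1,\\ N-1 & \text{if }d>1,\end{cases}
\]
are precisely the content of Theorem~\ref{jordantheorem}. Note that a block $N_{m_k}$ with $m_k=0$ is just an empty (zero) column and contributes nothing to $I_2(M')$, so without loss of generality I may assume $m_k\geq1$ for every $k$.

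For the inductive step I would write $M'=(M''\,|\,N_{m_c})$ with $M''=(M\,|\,N_{m_1}\,|\,\cdots\,|\,N_{m_{c-1}})$, and observe that $M''$ is again a matrix of linear forms, over the polynomial ring in the variables of $M$ together with those of the first $c-1$ nilpotent blocks; in particular the $m_c$ variables of $N_{m_c}$ are genuinely new. Hence Proposition~\ref{nilpotentiprop} applies with $X=M''$ and $n=m_c$, giving
\[
\cd\big(I_2(M')\big)=\cd\big(I_2(M'')\big)+m_c,\qquad \ara\big(I_2(M')\big)\leq\ara\big(I_2(M'')\big)+m_c.
\]

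Finally I would close the argument with the inductive hypothesis $\cd(I_2(M''))=\ara(I_2(M''))$ together with the general inequality $\cd\leq\ara$: these force the chain
\[
\cd\big(I_2(M'')\big)+m_c=\cd\big(I_2(M')\big)\leq\ara\big(I_2(M')\big)\leq\ara\big(I_2(M'')\big)+m_c=\cd\big(I_2(M'')\big)+m_c
\]
to be a string of equalities, so $\cd(I_2(M'))=\ara(I_2(M'))=\cd(I_2(M''))+m_c$. Substituting the inductive value of $\cd(I_2(M''))$ then replaces $\sum_{k=1}^{c-1}m_k$ by $\sum_{k=1}^{c}m_k$ and yields the stated formula.

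I do not expect a genuine obstacle: the whole proof is a bookkeeping combination of Theorem~\ref{jordantheorem} and Proposition~\ref{nilpotentiprop}. The only point requiring a little care is the last display, where the bare inequality for the arithmetical rank supplied by Proposition~\ref{nilpotentiprop} gets upgraded to an equality; this works exactly because at the previous inductive stage we already know $\cd=\ara$, which makes the chain of inequalities close on itself.
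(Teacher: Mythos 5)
Your proof is correct and is exactly the argument the paper intends: the paper simply states that the corollary follows from Theorem~\ref{jordantheorem} and Proposition~\ref{nilpotentiprop}, and your induction on $c$, peeling off one nilpotent block at a time and then squeezing the chain $\cd(I_2(M'))\leq\ara(I_2(M'))\leq\ara(I_2(M''))+m_c=\cd(I_2(M''))+m_c=\cd(I_2(M'))$ into equalities, is the natural way to spell out that deduction. The only small remark is that your aside about discarding blocks with $m_k=0$ is harmless but unnecessary, since Proposition~\ref{nilpotentiprop} (and Lemma~\ref{matte} behind it) already hold trivially for $n=0$.
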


\section{$(2 \times n)$-matrices with a zero diagonal} \label{section2zeros}

In Sections \ref{sectionscrolls} and \ref{sectionjordan} we analyzed the cases of concatenations of scroll blocks or Jordan blocks. In this Section we study a mixed case, in which there are both scroll and Jordan blocks. Precisely, let $n \geq 2$, $R=K[x_1,\dots,x_{2n-2}]$ and $J_n = I_2(A_n)$ be the ideal generated by the $2$-minors of the matrix
\[
A_n=\begin{pmatrix}
0 & x_1 & x_2 & \cdots &x_{n-2} & x_{n-1} \\
x_n & x_{n+1} & x_{n+2} &\cdots & x_{2n-2} & 0
\end{pmatrix}.
\]

\begin{rem} We add the first row of $A_n$ to the second one and  we apply the following linear change of variables $y_i=x_i+x_{n+i}$ for every $i=1,\dots,n-2$. We get the matrix
\begin{equation*}
A'_n=\begin{pmatrix}
0 & x_1 & x_2 & \cdots &x_{n-2} & x_{n-1} \\
x_n & y_1 & y_2 &\cdots & y_{n-2} & x_{n-1}
\end{pmatrix}
\end{equation*}
which is a Kronecker-Weierstrass form of $A_n$. In particular, $A'_n=(J_{0,1}|B_1|\cdots|B_1|J_{1,1})$ is a concatenation of a Jordan block of length $1$ and eigenvalue $0$, $n-2$ scroll blocks of length $1$ and a Jordan block of length $1$ and eigenvalue $1$. From Proposition \ref{abbasprop} it follows that $\hgt(J_n)=n-1$.
\end{rem}

\begin{Notation}\label{minorsnotation}
We label the columns of $A_n$ with the indices from $0$ to $n-1$. Recall that $[i,j]$ denotes the $2$-minor $x_ix_{n+j} - x_jx_{n+i}$ corresponding to the columns $i$ and $j$.
\end{Notation}

\begin{rem}
We recall that, if $M$ is a $(2 \times n)$-matrix of indeterminates and we label the columns with indices from $0$ to $n-1$, then the Pl\"ucker relations are the following: for every $h \in \{0,\dots,n-1\}$ and for every $0 \leq j_1 < j_2 < j_3 \leq n-1$,
\begin{equation} \label{plucker}
[h,j_1][j_2,j_3] - [h,j_2][j_1,j_3] + [h,j_3][j_1,j_2] = 0.
\end{equation}
\end{rem}

As in the case of generic matrices, we find an upper bound for the arithmetical rank of $J_n$, independent of the field.

\begin{thm} \label{ara2zeros}
Let $A_n$ the matrix above with entries in a commutative ring $R$. For every $n \geq 4$,
\[
\ara(J_n) \leq 2n-5.
\]
\end{thm}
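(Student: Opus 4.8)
The plan is to exhibit $2n-5$ polynomials generating $J_n$ up to radical, by first writing $J_n$ with a convenient generating set, then handling its monomial part with Lemma~\ref{SchmittVogel} and its binomial part with Theorem~\ref{BrunsPoset}, and finally absorbing $n-3$ of the resulting generators into the others via the Pl\"ucker relations \eqref{plucker}. First I would record the generators. Keeping the column labels $0,\dots,n-1$ of Notation~\ref{minorsnotation}, the $2$-minors of $A_n$ split into the monomials $A_j:=x_jx_n$ $(=\pm[0,j])$ for $j=1,\dots,n-1$, the monomials $C_i:=x_{n-1}x_{n+i}$ $(=\pm[i,n-1])$ for $i=1,\dots,n-2$, and the binomials $D_{ij}:=[i,j]=x_ix_{n+j}-x_jx_{n+i}$ for $1\le i<j\le n-2$. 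Thus $J_n=(A_1,\dots,A_{n-1},C_1,\dots,C_{n-2})+I_{\mathrm{sc}}$, where $I_{\mathrm{sc}}$ is the ideal of $2$-minors of the generic $2\times(n-2)$ submatrix $\widetilde A=\big(\begin{smallmatrix}x_1&\cdots&x_{n-2}\\x_{n+1}&\cdots&x_{2n-2}\end{smallmatrix}\big)$.

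For the monomial part I would apply Lemma~\ref{SchmittVogel} to $P=\{A_1,\dots,A_{n-1},C_1,\dots,C_{n-2}\}$ with $P_0=\{A_{n-1}\}$ and $P_\ell=\{A_\ell,C_\ell\}$ for $1\le\ell\le n-2$: condition $(iii)$ holds because $A_\ell C_\ell$ is divisible by $A_{n-1}=x_{n-1}x_n$. This gives $\sqrt{(A_\bullet,C_\bullet)}=\sqrt{(q_0,\dots,q_{n-2})}$, with $q_0=x_{n-1}x_n$ and $q_\ell=A_\ell+C_\ell=x_\ell x_n+x_{n-1}x_{n+\ell}$. For the binomial part, Theorem~\ref{BrunsPoset} together with the poset polynomials \eqref{BrunsPolynomials} gives $\sqrt{I_{\mathrm{sc}}}=\sqrt{(p_1,\dots,p_{2n-7})}$, where $p_\ell=\sum_{i+j=\ell+2}D_{ij}$ is the sum of the $2$-minors of $\widetilde A$ of rank $\ell$. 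Since $\sqrt{\mathfrak a+\mathfrak b}=\sqrt{\sqrt{\mathfrak a}+\sqrt{\mathfrak b}}$, this produces $\sqrt{J_n}=\sqrt{(q_0,\dots,q_{n-2},p_1,\dots,p_{2n-7})}$, a generating set with $3n-8$ elements.

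The decisive step is the reduction from $3n-8$ to $2n-5$ generators. From \eqref{plucker} (used with reference index $i$, resp.\ $k$) one gets, after substituting $A_\bullet=q_\bullet-C_\bullet$, the identities
\begin{equation*}
q_0\,D_{ij}=q_iC_j-q_jC_i\quad(1\le i<j\le n-2),\qquad q_k\,D_{ij}=q_i\,[k,j]-q_j\,[k,i]\quad(k\notin\{i,j\}).
\end{equation*}
As a rank-$\ell$ minor $D_{ij}$ of $\widetilde A$ has $i<j\le\ell+1$, these yield $q_{\ell+2}\,p_\ell\in(q_1,\dots,q_{\ell+1})$ for $1\le\ell\le n-4$, and $q_0\,p_{n-3}\in(q_1,\dots,q_{n-2})$. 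I would then replace the generating set by
\begin{equation*}
S'=\{q_0+p_{n-3}\}\cup\{\,q_{\ell+2}+p_\ell:1\le\ell\le n-4\,\}\cup\{q_1,q_2\}\cup\{p_{n-2},\dots,p_{2n-7}\},
\end{equation*}
which has $1+(n-4)+2+(n-4)=2n-5$ elements, and prove $\sqrt{(S')}=\sqrt{J_n}$. One inclusion is immediate. For the other I would use repeatedly the elementary fact that if $I$ is radical and $a+b,ab\in I$ then $a,b\in I$ (indeed $a^2=a(a+b)-ab\in I$). Starting from $q_1,q_2\in S'$, the relation $q_3p_1\in(q_1,q_2)\subseteq\sqrt{(S')}$ forces $q_3,p_1\in\sqrt{(S')}$; inductively, once $q_1,\dots,q_{\ell+1}\in\sqrt{(S')}$, the relation $q_{\ell+2}p_\ell\in(q_1,\dots,q_{\ell+1})$ forces $q_{\ell+2},p_\ell\in\sqrt{(S')}$ for $\ell=1,\dots,n-4$; finally $q_0p_{n-3}\in(q_1,\dots,q_{n-2})\subseteq\sqrt{(S')}$ forces $q_0,p_{n-3}\in\sqrt{(S')}$, and the remaining $p_{n-2},\dots,p_{2n-7}$ already lie in $S'$. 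Hence all $q_\bullet$ and $p_\bullet$ lie in $\sqrt{(S')}$, so $\sqrt{J_n}\subseteq\sqrt{(S')}$ and $\ara(J_n)\le|S'|=2n-5$.

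The hard part will be the bookkeeping in this last step: verifying the two Pl\"ucker identities with the right signs, checking that the indices occurring in $p_\ell$ never exceed $\ell+1$ (so that absorbing $p_\ell$ into $q_{\ell+2}$ only involves the ``earlier'' generators $q_1,\dots,q_{\ell+1}$, which is exactly what makes the inductive recovery work), and dealing with the small or degenerate values $n=4,5$ where several of the index ranges above become empty. Every tool used—Lemma~\ref{SchmittVogel}, the poset polynomials of Theorem~\ref{BrunsPoset}, the Pl\"ucker relations, and the elementary radical fact—is valid over an arbitrary commutative ring, so the resulting bound is independent of the field.
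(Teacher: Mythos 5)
Your proof is correct, and it follows the same overall strategy as the paper: decompose $J_n$ into its monomial part (handled by Lemma~\ref{SchmittVogel}, giving $q_0,\dots,q_{n-2}$) and its binomial part, the ideal of $2$-minors of a generic $2\times(n-2)$ submatrix (handled by Theorem~\ref{BrunsPoset}, giving $p_1,\dots,p_{2n-7}$), so that $J_n$ is generated up to radical by $3n-8$ elements, and then merge $n-3$ pairs via the Pl\"ucker relations~\eqref{plucker} to drop to $2n-5$.

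The genuine difference is the choice of which $q$'s to absorb into which $p$'s. In the paper's indexing (paper's $q_1=x_{n-1}x_n$ is your $q_0$, paper's $q_{i+1}$ is your $q_i$), the paper takes
\[
K_n=(p_1,\dots,p_{n-4},\; q_1+p_{n-3},\; q_2+p_{n-2},\dots,q_{n-3}+p_{2n-7},\; q_{n-2},\, q_{n-1}),
\]
keeping the low-rank $p$'s and the top two $q$'s unpaired, and recovers the remaining $q_i$ by a \emph{downward} induction $i=n-3,\dots,1$, using that $q_i p_{n-4+i}$ lies in the ideal generated by $q_j$ with $j>i$. You take the mirror image: keep the high-rank $p_{n-2},\dots,p_{2n-7}$ and the bottom $q_1,q_2$ unpaired, pair $q_{\ell+2}$ with $p_\ell$ for small $\ell$ (plus $q_0$ with $p_{n-3}$), and run an \emph{upward} induction using $q_{\ell+2}p_\ell\in(q_1,\dots,q_{\ell+1})$, which hinges on the observation that a rank-$\ell$ minor $D_{ij}$ of $\widetilde A$ satisfies $j\le\ell+1$. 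I checked your two Pl\"ucker identities with the sign convention induced by Notation~\ref{minorsnotation}, and they hold (for all relative orderings of $k,i,j$); the index bookkeeping and the degenerate cases $n=4,5$ work out, so the inductive recovery goes through. The two pairings necessarily share the generator $q_0+p_{n-3}$ but are otherwise different; either gives a valid proof, and as you note the argument is characteristic-free.
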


\begin{proof}
For $n \geq 4$, the ideal $J_n$ contains both monomials and minors and it can be written in the form $J_n = J'_n + J''_n$, where
\begin{gather*}
J'_n = (x_1x_n, x_2x_n, \dots, x_{n-1}x_n, x_{n-1}x_{n+1}, \dots, x_{n-1}x_{2n-2}),\\
J''_n = (x_i x_{n+j} - x_{n+i}x_j : 1 \leq i<j \leq n-2).
\end{gather*}

In particular, the ideal $J''_n$ is the ideal of $2$-minors of the submatrix $C_n$ of $A_n$, obtained by removing the first and the last column from $A_n$. We prove that $\ara(J_n) \leq 2n-5$. To do this we will define $n-1$ polynomials containing all the monomial generators of $J_n$ and $2(n-2)-4+1=2n-7$ polynomials containing all the binomial generators of $J_n$. In total we get $3n-8$ polynomials that generate $J_n$ up to radical. Then we will reduce these polynomials to $2n-5$ by summing in a suitable way some of the polynomials in the first group to some of the polynomials in the second group.

First we define the following polynomials containing all the monomial generators of $J_n$:
\begin{align*}
q_1 &= x_{n-1} x_n, \\[2mm]
q_2 &= x_1 x_n + x_{n-1} x_{n+1}, \\[2mm]
q_3 &= x_2 x_n + x_{n-1} x_{n+2}, \\[2mm]
    &\ \ \vdots \\[2mm]
q_{n-1} &= x_{n-2} x_n + x_{n-1} x_{2n-2}.
\end{align*}

From Lemma \ref{SchmittVogel}, it follows that $J'_n = \sqrt{(q_1,\dots,q_{n-1})}$. On the other hand, by applying Theorem \ref{BrunsPoset} we get $\ara(J''_n) = 2(n-2)-4+1=2n-7$, where $J''_n=\sqrt{(p_1,\dots,p_{2n-7})}$ and $p_i$ is the sum of the minors corresponding to rank $i$ elements in the poset $\Delta(C_n)$ (see \eqref{PartialOrder} and \eqref{BrunsPolynomials}).

For $n \geq 4$, we prove that $J_n = \sqrt{K_n}$, where
\[
K_n = (p_1,\dots, p_{n-4}, q_1 + p_{n-3}, q_2 + p_{n-2}, \dots, q_{n-3} + p_{2n-7}, q_{n-2}, q_{n-1}).
\]

In other words, we consider the lowest $n-4$ levels of the poset $\Delta(C_n)$ and the corresponding polynomials $p_1,\dots, p_{n-4}$ will also be generators of $J_n$ up to radical. Then each of the remaining $n-3$ polynomials $p_{n-4+i}$ will be summed to $q_i$ for $i=1,\dots,n-3$. Finally we consider $q_{n-2}$ and $q_{n-1}$.

Let $\widetilde{J}_n = \widetilde{J}'_n + \widetilde{J}''_n$, where
\[
\widetilde{J}'_n = (q_1,\dots,q_{n-1}) \text{ and } \widetilde{J}''_n = (p_1,\dots,p_{2n-7}).
\]

Notice that $\sqrt{J'_n} = \sqrt{\widetilde{J}'_n}$ and $\sqrt{J''_n} = \sqrt{\widetilde{J}''_n}$. Then
\[
\sqrt{J_n} = \sqrt{J'_n + J''_n} = \sqrt{\sqrt{J'_n} + \sqrt{J''_n}} = \sqrt{\sqrt{\widetilde{J}'_n} + \sqrt{\widetilde{J}''_n}} = \sqrt{\widetilde{J}'_n + \widetilde{J}''_n} = \sqrt{\widetilde{J}_n},
\]
where the second and the fourth equality are true for any pair of ideals. Hence it suffices to prove that $\sqrt{\widetilde{J}_n} = \sqrt{K_n}$. Of course $K_n \subset \widetilde{J}_n$, thus $\sqrt{K_n} \subset \sqrt{\widetilde{J}_n}$.

Conversely, we show that the generators of $\widetilde{J}_n$ belong to $\sqrt{K_n}$. We know that $p_1,\dots,p_{n-4},q_{n-2},q_{n-1} \in K_n$. We need to prove that
\begin{equation} \label{claim}
q_1,\dots,q_{n-3} \in \sqrt{K_n}.
\end{equation}

It will follow that $p_{n-3},\dots,p_{2n-7} \in \sqrt{K_n}$, thus $\widetilde{J}_n \subset \sqrt{K_n}$.

With respect to the Notation \ref{minorsnotation}, the polynomials $q_i$ and $p_{n-4+i}$ can be written in the form
\begin{eqnarray*}
q_1 &=& -[0,n-1],\quad q_i = -[0,i-1] -[i-1,n-1] \text{\quad for } i=2,\dots,n-1,\\
p_{n-4+i} &=& \sum_{k=0}^{\lfloor \frac{n-3+i}{2} \rfloor -i} [i+k,n-2-k] \text{\quad for } i=1,\dots,n-3.
\end{eqnarray*}

We know that $q_{n-2},q_{n-1} \in K_n$. Let $i \in \{2,\dots,n-3\}$ and suppose that $q_j \in \sqrt{K_n}$ for every $j \in \{i+1,\dots,n-1\}$. We prove that $q_i \in \sqrt{K_n}$. Notice that
\[
q_i^2 = q_i(q_i+p_{n-4+i}) - q_ip_{n-4+i}
\]

Since $q_i+p_{n-4+i} \in K_n$, it is enough to show that $-q_ip_{n-4+i} \in \sqrt{K_n}$. By using the Notation \ref{minorsnotation}, this element can be rewritten in the form
\begin{equation} \label{qipi}
-q_ip_{n-4+i} = \sum_{k=0}^{\lfloor \frac{n-3+i}{2} \rfloor -i} \Big( [0,i-1] + [i-1,n-1] \Big) [i+k,n-2-k].
\end{equation}

Let $k \in \{0,\dots,\lfloor \frac{n-3+i}{2} \rfloor -i\}$, then the $k$-th summand of \eqref{qipi} is
\begin{gather*}
\Big( [0,i-1] + [i-1,n-1] \Big) [i+k,n-2-k] \\
= [0,i-1][i+k,n-2-k] + [i-1,n-1][i+k,n-2-k] \\[2mm]
=[0,i+k][i-1,n-2-k] - [0,n-2-k][i-1,i+k] \\
+ [i+k,n-1][i-1,n-2-k] - [n-2-k,n-1][i-1,i+k] \\[2mm]
= [i-1,n-2-k] \Big( [0,i+k]+[i+k,n-1] \Big) \\
- [i-1,i+k] \Big( [0,n-2-k]+[n-2-k,n-1] \Big) \\[2mm]
= -[i-1,n-2-k] q_{i+k+1} + [i-1,i+k] q_{n-k-1},
\end{gather*}
where the second equality follows from the Pl\"ucker relations \eqref{plucker} with respect to the indices $h=0, j_1=i-1, j_2=i+k, j_3=n-2-k$ for the first summand and $h=n-1, j_1=i-1, j_2=i+k, j_3=n-2-k$ for the second summand. Hence
\[
-q_ip_{n-4+i} = \sum_{k=0}^{\lfloor \frac{n-3+i}{2} \rfloor -i} \Big( -[i-1,n-2-k] q_{i+k+1} + [i-1,i+k] q_{n-k-1} \Big),
\]
where $q_{i+k+1}, q_{n-k-1} \in \sqrt{K_n}$ since $i+k+1$ and $n-k-1$ are both greater than $i$ and less than or equal to $n-1$. Thus $q_i^2 \in \sqrt{K_n}$ and therefore $q_i, p_{n-4+i} \in \sqrt{K_n}$.

It remains to prove that $q_1 \in \sqrt{K_n}$. Notice that
\[
q_1^2 = q_1(q_1+p_{n-3}) - q_1p_{n-3}
\]

Since $q_1+p_{n-3} \in K_n$, it is enough to show that $-q_1p_{n-3} \in \sqrt{K_n}$. By using the Notation \ref{minorsnotation}, this element can be rewritten in the form
\begin{equation} \label{q1}
-q_1p_{n-3} = \sum_{k=0}^{\lfloor \frac{n-2}{2} \rfloor -1} [0,n-1] [1+k,n-2-k].
\end{equation}

Let $k \in \{0,\dots,\lfloor \frac{n-2}{2} \rfloor -1\}$, then the $k$-th summand of \eqref{q1} is
\begin{gather*}
[0,n-1][1+k,n-2-k] = [0,1+k][n-2-k,n-1] - [0,n-2-k][1+k,n-1]\\[2mm]
= [0,1+k][n-2-k,n-1] - [0,n-2-k][1+k,n-1] \\
+ [1+k,n-1][n-2-k,n-1] - [1+k,n-1][n-2-k,n-1]\\[2mm]
= [n-2-k,n-1] \Big([0,1+k] + [1+k,n-1] \Big) - [1+k,n-1] \Big( [0,n-2-k] + [n-2-k,n-1] \Big) \\[2mm]
= -[n-2-k,n-1] q_{2+k} + [1+k,n-1] q_{n-k-1},
\end{gather*}
where the first equality follows from the Pl\"ucker relations \eqref{plucker} with respect to the indices $h=0, j_1=1+k, j_2=n-2-k, j_3=n-1$. Hence
\[
-q_1p_{n-3} = \sum_{k=0}^{\lfloor \frac{n-2}{2} \rfloor -1} \Big( -[n-2-k,n-1] q_{2+k} + [1+k,n-1] q_{n-k-1} \Big),
\]
where $q_{2+k}, q_{n-k-1} \in \sqrt{K_n}$ since $2+k$ and $n-k-1$ are both greater than $1$ and less than or equal to $n-1$. Thus $q_1^2 \in \sqrt{K_n}$ and therefore $q_1, p_{n-3} \in \sqrt{K_n}$.
\end{proof}

Now we compute the cohomological dimension of $J_n$. Again, as for the generic matrices, it depends on the characteristic of the field.

\begin{thm} \label{AraCd2zeros}
Let $n \geq 2$, $R=K[x_1,\dots,x_{2n-2}]$ and $J_n = I_2(A_n)$ be the ideal generated by the $2$-minors of $A_n$. Then
\begin{itemize}
\item[i)] $\hgt(J_2)=\cd(J_2)=\ara(J_2)=1$ and $\hgt(J_3)=\cd(J_3)=\ara(J_3)=2$,
\item[ii)] for $n \geq 4$,
\[
\cd(J_n) =
\begin{cases}
\hgt(J_n)=n-1 & \text{if } \chara(K)=p>0\\
\ara(J_n)=2n-5 & \text{if } \chara(K)=0
\end{cases}.
\]
\end{itemize}
\end{thm}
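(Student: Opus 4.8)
The first part (i), the small cases $n=2,3$, is essentially immediate: for $n=2$ the ideal $J_2=(x_2x_3-x_1x_4)$ is principal, so $\hgt=\cd=\ara=1$; for $n=3$ one checks directly that $J_3$ is generated (up to radical) by two elements and has height $2$ by Proposition \ref{abbasprop}, forcing all three invariants to equal $2$. So the real content is part (ii), for $n\geq 4$.

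\medskip

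\noindent\textbf{Positive characteristic.} Here the plan is to show $\cd(J_n)=\hgt(J_n)=n-1$. The upper bound $\cd(J_n)\le n-1$ is the delicate point; the lower bound $\cd(J_n)\ge\hgt(J_n)$ is automatic from the inequality $\hgt(I)\le\cd(I)$ recalled in the introduction. For the upper bound, the decisive structural fact (from the Remark preceding Notation \ref{minorsnotation}) is that $A_n$ is, in suitable coordinates, the Kronecker--Weierstrass form $A'_n=(J_{0,1}|B_1|\cdots|B_1|J_{1,1})$, a concatenation involving scroll blocks; one would like to say the quotient $R/J_n$ is a perfect (Cohen--Macaulay) ring and invoke \cite[Proposition 4.1, p. 110]{PS73} exactly as in the proof of Theorem \ref{scrolltheorem}. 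The natural route is therefore: prove $R/J_n$ is Cohen--Macaulay (e.g. by exhibiting it as a deformation/specialization of a scroll, or by a direct Gröbner-basis / ASL argument, or by recognizing it as a ladder determinantal ring, all of which are known to be Cohen--Macaulay), whence $J_n$ is perfect, and then Peskine--Szpiro's vanishing theorem gives $\cd(J_n)=\hgt(J_n)$. I expect the Cohen--Macaulayness of $R/J_n$ to be the main technical obstacle in this half; if it is not already in the literature for this specific ladder, one would localize/specialize to reduce to the scroll case $R_{n_1,n_2}$ of Section \ref{sectionscrolls}, or pass through the two-zero matrix as a flat degeneration of a generic one.

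\medskip

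\noindent\textbf{Characteristic zero.} Here the target is $\cd(J_n)=\ara(J_n)=2n-5$, and in fact the stronger statement announced in the introduction: $H^i_{J_n}(R)=0$ for $n-1\le i<2n-5$ while $H^{2n-5}_{J_n}(R)\ne 0$. The inequality $\cd(J_n)\le\ara(J_n)\le 2n-5$ is already in hand from Theorem \ref{ara2zeros}, so everything reduces to the nonvanishing $H^{2n-5}_{J_n}(R)\ne 0$ (which simultaneously forces $\ara(J_n)\ge 2n-5$, pinning down $\ara(J_n)=2n-5$). The plan mirrors the geometric argument of Theorem \ref{scrolltheorem}. By Remark \ref{reductiontoc} we may assume $K=\mathbb{C}$. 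Set $Y=\mathrm{Proj}(R/J_n)\subset\mathbb{P}^{2n-3}$ and use Hartshorne's relation \eqref{localsheafdim} to translate the statement into $\cd(\mathbb{P}^{2n-3}\setminus Y)\ge 2n-6$. Assuming the contrary and applying Theorem \ref{hartshornetheorem}, the restriction maps $H^i_{\text{sing}}(\mathbb{P}^{2n-3},\mathbb{C})\to H^i_{\text{sing}}(Y_{\text{an}},\mathbb{C})$ would be isomorphisms for $i<3$ and injective for $i=3$; comparing Betti numbers (those of $\mathbb{P}^{2n-3}$ being $1$ in each even degree $\le 4n-6$) one again seeks a contradiction from the topology of $Y$. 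Since $Y$ is the projective variety attached to a concatenation of two Jordan points and $n-2$ scrolls $B_1$, it is (a cone-like join / degenerate scroll) whose second Betti number one expects to exceed $1$ — the analogue of $\mathrm{Pic}(Y)\cong\mathbb{Z}^2$ in Theorem \ref{scrolltheorem}. The cleanest way to see this is probably to relate $Y$ to a rational normal scroll $R_{n_1,n_2}$ by a linear section or projection and transport the computation of $H^2_{\text{sing}}$, via the exponential sequence and the injection $\mathrm{Pic}(Y)\hookrightarrow H^2_{\text{sing}}(Y_{\text{an}},\mathbb{Z})$ as in \eqref{picequivalence}, using Cohen--Macaulayness (dimension $\ge 3$) to kill $H^2_{\mathfrak m}(R/J_n)_0$.

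\medskip

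\noindent\textbf{Main obstacle.} In both characteristics the crux is understanding the geometry/homology of $Y=\mathrm{Proj}(R/J_n)$: in positive characteristic one needs its Cohen--Macaulayness (perfectness of $J_n$), and in characteristic zero one needs a lower bound on $\dim_{\mathbb{C}}H^2_{\text{sing}}(Y_{\text{an}},\mathbb{C})$ (equivalently, that $\mathrm{rank}\,\mathrm{Pic}(Y)\ge 2$, or that some other cohomology class survives). I would expect the characteristic-zero nonvanishing $H^{2n-5}_{J_n}(R)\ne 0$ — rather than the $\ara$ upper bound, which is already done — to be where the real work lies, and the strategy would be either a direct topological computation on the explicit variety $Y$ or a reduction to the scroll case already handled in Theorem \ref{scrolltheorem} together with the excision/Mayer--Vietoris behavior of local cohomology along the two Jordan directions (handled monomially, as in Lemma \ref{matte} and Proposition \ref{nilpotentiprop}).
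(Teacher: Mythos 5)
Your plan for positive characteristic is morally the same as the paper's: show that $J_n$ is perfect and then apply Peskine--Szpiro. But you flag Cohen--Macaulayness of $R/J_n$ as the ``main technical obstacle'' and float several heavier routes (Gr\"obner bases, ladder ASL, flat degeneration); the paper dispatches this in one line by observing that $\hgt(J_n)=\mathrm{grade}(J_n)=n-1$ equals the length of the Eagon--Northcott complex of the $2\times n$ matrix $A_n$, so by \cite[Theorem 2]{EN62} that complex is a resolution and $\pd_R(R/J_n)=n-1=\hgt(J_n)$. No ladder or degeneration argument is needed.

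For characteristic zero your route is genuinely different from the paper's, and it contains a real gap. The paper does not argue geometrically at all here: it introduces the generic $2\times n$ matrix $M_n$ over $S=K[x,y,x_1,\dots,x_{2n-2}]$ with the two corner entries replaced by new variables $x,y$, so that $J_n=I_2(M_n)R$ with $R=S/(x,y)$, and then runs the long exact sequences in local cohomology induced by multiplication by $x$ and then by $y$, feeding in Witt's vanishing theorem and the Lyubeznik--Singh--Walther isomorphism $H^{2n-3}_I(S)\cong H^{2n}_{\mathfrak m}(S)$. This ``specialize from the generic case'' argument is entirely algebraic, and it gives strictly more than the $\cd$ computation: it pins down exactly which $H^i_{J_n}(R)$ are nonzero.

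Your proposed geometric argument, patterned on Theorem \ref{scrolltheorem}, stalls precisely where you say you expect the work to lie. The step ``$\mathrm{rank}\,\mathrm{Pic}(Y)\ge 2$, by analogy with the scroll'' is not justified, and the analogy is misleading: $Y=V(J_n)$ is not a rational normal scroll. In fact $J_n$ is not even prime — already for $n=4$, one has $x_3x_4\in J_4$ while $x_3\notin J_4$ and $x_4\notin J_4$ — so $Y$ is a reducible, singular linear slice of the Segre $\mathbb{P}^1\times\mathbb{P}^{n-1}$, not an irreducible scroll, and the scroll Picard computation ($\mathrm{Pic}\cong\mathbb{Z}^2$) does not transfer. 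The exponential-sequence injection $\mathrm{Pic}(Y)\hookrightarrow H^2_{\text{sing}}(Y_{\text{an}},\mathbb{Z})$ would still go through (the needed depth follows from the Cohen--Macaulayness you establish in the other half), but the crucial lower bound on $\mathrm{Pic}(Y)$ or on $\dim_{\mathbb C}H^2_{\text{sing}}(Y_{\text{an}},\mathbb{C})$ is left as an unproved expectation. To turn your sketch into a proof you would need a direct computation of $H^2(Y)$ for this reducible degenerate Segre slice, which is not supplied; the paper's algebraic specialization sidesteps all of this.
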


First we consider the case $\chara(K)=0$. Under this assumption, not only we prove that $\cd(J_n) = 2n-5$, but we also show the vanishing of all local cohomology modules with indices between $\hgt(J_n)$ and $2n-5$.


\begin{thm}\label{cd2zerosbis}
Let $K$ be a field of characteristic $0$ and $R=K[x_1,\dots,x_{2n-2}]$. For every $n \geq 4$,
\begin{equation*}
H^{i}_{J_n}(R)\neq 0 \Longleftrightarrow i=n-1 \text{ or }\; i=2n-5.
\end{equation*}
In particular, $\cd_R(J_n)=2n-5$.
\end{thm}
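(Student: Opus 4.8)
The plan is to identify $R/J_n$ (up to radical) with a ring whose local cohomology we can compute, and exploit the characteristic-$0$ machinery already assembled in the paper. The first step is to recall from Theorem~\ref{ara2zeros} that $\sqrt{J_n}=\sqrt{K_n}$ and, more usefully, from the Kronecker--Weierstrass form that $\sqrt{J_n}=\sqrt{J'_n+J''_n}$ with $J''_n=I_2(C_n)$ the ideal of $2$-minors of a $(2\times(n-2))$-generic matrix $C_n$ and $J'_n$ a squarefree monomial ideal. I would like to split off $J'_n$ so as to reduce to the generic case; concretely, after setting $\mathfrak m=(x_{n-1},x_n)$ (or whichever linear variables cut out the monomial part), I expect that $\sqrt{J_n}=\sqrt{(\mathfrak a + \mathfrak m)}$ for a suitable ideal, or more precisely that a Mayer--Vietoris / decomposition argument reduces the computation of $\cd_R(J_n)$ to that of $I_2(C_n)$ together with the "boundary" contributions. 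The cleanest route is probably to use the long exact Mayer--Vietoris sequence in local cohomology for $J_n = J'_n + J''_n$ (localizing, as in Lemma~\ref{matte}), together with the known vanishing $H^i_{I_2(C_n)}(R)\neq 0 \iff i=n-3$ or $i=2(n-2)-3=2n-7$ in characteristic $0$ (Bruns--Schw\"anzl, \cite[Corollary p.~440]{BS90}, applied to the $(2\times(n-2))$-generic matrix), and the fact that $J'_n$, being a squarefree monomial ideal, has $\cd$ equal to its projective dimension which can be read off combinatorially.

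Assuming such a reduction, the heart of the argument is a careful bookkeeping of the Mayer--Vietoris sequence
\[
\cdots \to H^{i}_{J'_n+J''_n}(R)\to H^{i}_{J'_n}(R)\oplus H^{i}_{J''_n}(R)\to H^{i}_{J'_n\cap J''_n}(R)\to H^{i+1}_{J'_n+J''_n}(R)\to\cdots,
\]
using that in characteristic $0$ each of the three "input" ideals has local cohomology concentrated in at most two degrees (its height and its arithmetical rank), and that these degrees are compatible in the sense that the connecting maps force vanishing in the intermediate range. One would compute $\hgt$ and $\cd$ of $J'_n$, of $J''_n$ (namely $n-3$ and $2n-7$), and of $J'_n\cap J''_n$ (via an explicit primary decomposition, or by recognizing $J'_n\cap J''_n$ up to radical as a union of linear spaces and a generic-type locus), and then check that the only surviving cohomological degrees for $J_n$ are $n-1=\hgt(J_n)$ (which is forced by $\hgt(J_n)\le\cd(J_n)$) and $2n-5$ (which is forced from below by $\ara(J_n)=2n-5$ once one knows $\cd(J_n)=\ara(J_n)$, e.g. via Theorem~\ref{hartshornetheorem} applied to $\mathbb P^{2n-3}\setminus\mathrm{Proj}(R/J_n)$ together with a Picard-group computation exactly as in the proof of Theorem~\ref{scrolltheorem}). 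The vanishing in the open range $n-1<i<2n-5$ is the new content, and it should come out of the degree count in the Mayer--Vietoris sequence.

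Alternatively, and perhaps more robustly, I would run the geometric argument of Theorem~\ref{scrolltheorem} directly: by Remark~\ref{reductiontoc} reduce to $K=\mathbb C$, let $Y=\mathrm{Proj}(R/J_n)\subset\mathbb P^{2n-3}$, and analyze the punctured space $\mathbb P^{2n-3}\setminus Y$ via \eqref{localsheafdim} and Theorem~\ref{hartshornetheorem}. Here one needs the singular cohomology of $Y_{\mathrm{an}}$ in low degrees; since $Y$ decomposes as a union of the scroll-type variety $\mathrm{Proj}(R/J''_n)$ and linear subspaces meeting it along explicit linear strata, a Mayer--Vietoris computation in singular cohomology gives its Betti numbers, and comparing with $H^\bullet(\mathbb P^{2n-3},\mathbb C)$ via Hartshorne's theorem pins down $\cd(\mathbb P^{2n-3}\setminus Y)$. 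The main obstacle I anticipate is precisely controlling $J'_n\cap J''_n$ (equivalently, the intersection $Y''\cap(\text{linear part})$) well enough to run either Mayer--Vietoris cleanly: one must show this intersection ideal also has local cohomology (or singular cohomology) concentrated in the "right" degrees, so that no spurious classes appear in the intermediate range. A secondary subtlety is checking the edge of the range, i.e. that $H^{n-1}_{J_n}(R)$ is genuinely nonzero (not merely $\le\cd$), which follows because $\hgt(J_n)=n-1$ by Proposition~\ref{abbasprop} and $H^{\hgt}_{J_n}(R)\neq 0$ always; and that $H^{2n-5}_{J_n}(R)\neq0$, which follows from $\ara(J_n)=2n-5$ once the Grothendieck--Lefschetz/Picard argument establishes $\cd_R(J_n)\ge 2n-5$. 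I would present the characteristic-$p$ half of Theorem~\ref{AraCd2zeros} separately, since there $J_n$ is perfect (Cohen--Macaulay quotient) and \cite[Proposition~4.1]{PS73} gives $\cd=\hgt=n-1$ immediately.
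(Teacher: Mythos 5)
Your proposal takes a genuinely different route from the paper and runs into a concrete gap precisely at the new content (the intermediate vanishing). The paper does \emph{not} decompose $J_n$ or argue geometrically: it realizes $J_n$ as the specialization $IR$ of the generic ideal $I=I_2(M_n)$ over $S=K[x,y,x_1,\dots,x_{2n-2}]$, where $M_n$ is generic with $(1,1)$-entry $x$ and $(2,n)$-entry $y$ and $R=S/(x,y)$, and then chases the long exact sequences induced by multiplication by $x$ and by $y$. The decisive inputs are Witt's theorem ($H^i_I(S)\neq 0$ iff $i=n-1$ or $i=2n-3$) and the Lyubeznik--Singh--Walther isomorphism $H^{2n-3}_I(S)\cong H^{2n}_{\mathfrak m}(S)$, which turns the kernels of $\cdot x$ and $\cdot y$ on the top module into local cohomology at the maximal ideal; this is exactly what annihilates all intermediate degrees and identifies $H^{2n-5}_{J_n}(R)\cong H^{2n-2}_{\mathfrak m R}(R)\neq 0$.

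The gap in your Mayer--Vietoris route is that the ``degree count'' does \emph{not} force vanishing in the open range $n-1<i<2n-5$. By Witt applied to the $(2\times(n-2))$-generic submatrix $C_n$, one has $H^{2n-7}_{J''_n}(R)\neq 0$, and $2n-7$ lies strictly inside the range $n-1<i<2n-5$ as soon as $n\geq 7$; so the sequence
\[
H^{i-1}_{J'_n\cap J''_n}(R)\to H^i_{J_n}(R)\to H^i_{J'_n}(R)\oplus H^i_{J''_n}(R)
\]
has a nonzero right-hand term at $i=2n-7$, and you must control the actual maps, not just the supports. The premise that each input ideal has local cohomology concentrated in exactly two degrees is also unjustified for $J'_n$ (a squarefree monomial ideal with minimal primes of heights $2$ and $n-1$) and for $J'_n\cap J''_n$. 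Your geometric alternative has an additional structural obstruction: $\mathrm{Proj}(R/J_n)$ is \emph{reducible} (two linear spaces plus the generic determinantal locus inside $\{x_{n-1}=x_n=0\}$, all of the same dimension), so the Picard-group argument from Theorem~\ref{scrolltheorem}, which uses irreducibility and Cohen--Macaulayness of a scroll with $\mathrm{Pic}\cong\mathbb Z^2$, does not transfer; and even if one could extract $\cd\geq 2n-5$ from it, Hartshorne's theorem gives no control over vanishing at a fixed intermediate degree such as $2n-7$. The endpoint observations you make (nonvanishing at $i=n-1$ from the height, the characteristic-$p$ case from perfection) are correct and agree with the paper.
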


\begin{Notation}\label{notationcd2zeros} We fix $S=K[x,y,x_1,\dots,x_{2n-2}]$ and $A=S/(x)$, then  $R=A/(y)=S/(x,y)$. We consider the generic $(2 \times n)$-matrix $M_n$  over $S$
\begin{equation*}
M_n=\begin{pmatrix}
x & x_1 & \cdots & x_{n-2} & x_{n-1} \\
x_n & x_{n+1} & \cdots & x_{2n-2} & y
\end{pmatrix}
\end{equation*}
and $I=I_2(M_n)$ the ideal of $2$-minors of $M_n$. Notice that, if $x=y=0$, the ideal $I$ coincides with the ideal $J_n$. In other words, $J_n = IR$.
\end{Notation}

The basic idea is to reduce the vanishing of $H_{IR}^i(R)$ to the vanishing of $H_{I}^i(S)$ by using the multiplication maps by $x$ and by $y$. The modules $H_{I}^i(S)$ are well-understood thanks to the following results due to Witt and Lyubeznik, Singh and Walther.

\begin{thm}\bf (Witt, \cite[Theorem 1.1]{W12}) \it \label{witt}
Let $S$ and $I$ be as above. Then
\begin{equation*}
H_I^i(S) \neq 0 \Longleftrightarrow i=n-1 \text{ or } i=2n-3.
\end{equation*}
\end{thm}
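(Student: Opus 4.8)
The plan is to treat three ranges of cohomological degrees separately. Since $I=I_2(M_n)$ is the ideal of $2$-minors of the generic $(2\times n)$-matrix $M_n$, the ring $S/I$ is Cohen--Macaulay by Hochster--Eagon \cite{HE71}, so $\mathrm{grade}(I)=\hgt(I)=n-1$; hence $H^i_I(S)=0$ for $i<n-1$ and $H^{n-1}_I(S)\neq 0$. At the other extreme, by Bruns--Schw\"anzl \cite{BS90} one has $\cd_S(I)=\ara(I)=2n-3$ over a field of characteristic $0$, so $H^i_I(S)=0$ for $i>2n-3$ and $H^{2n-3}_I(S)\neq 0$. It therefore suffices to prove the vanishing $H^i_I(S)=0$ for $n\le i\le 2n-4$, which is vacuous when $n\le 3$.

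For the middle range I would first prove a localization statement. Let $\xi$ be any of the $2n$ entries of $M_n$; by the column-permutation and row-exchange symmetries of a generic matrix we may assume $\xi$ is the $(1,1)$-entry. Over $S_\xi$, the Pl\"ucker-type identity $\xi\,[i,j]=(M_n)_{1,i}\,[1,j]-(M_n)_{1,j}\,[1,i]$ shows that $IS_\xi$ is generated by the $n-1$ minors $[1,2],\dots,[1,n]$, and a triangular change of variables turns these into $n-1$ of the coordinates of $S_\xi$; thus $IS_\xi$ is generated by a regular sequence of length $n-1$ in the regular ring $S_\xi$, and $H^i_I(S)_\xi=H^i_{IS_\xi}(S_\xi)=0$ for all $i\neq n-1$. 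Letting $\xi$ range over all entries gives $\mathrm{Supp}_S H^i_I(S)\subseteq\{\mathfrak m\}$ for $i\neq n-1$, where $\mathfrak m$ is the homogeneous maximal ideal; so for such $i$ the module $H^i_I(S)$ is $\mathfrak m$-torsion, and being a local cohomology module (hence a holonomic $\mathcal D$-module in characteristic $0$) it is a finite direct sum of copies of the injective hull $E=E_S(K)$. Writing $H^i_I(S)\cong E^{\oplus c_i}$, the multiplicity $c_i$ is exactly the Lyubeznik number $\lambda_{0,2n-i}(S/I)$, so the problem becomes: $\lambda_{0,q}(S/I)=0$ for $4\le q\le n$.

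Here the characteristic-zero hypothesis enters. By the base-change argument of Remark~\ref{reductiontoc}, applied degree by degree, we may assume $K=\mathbb{C}$. Now $S/I$ is the homogeneous coordinate ring of the Segre variety $\mathrm{Proj}(S/I)=\mathbb{P}^1\times\mathbb{P}^{n-1}\subset\mathbb{P}^{2n-1}$, which is \emph{smooth}; for cones over smooth projective varieties the Lyubeznik numbers are computed topologically from the singular cohomology of the projective variety (Garc\'ia L\'opez--Sabbah; see also Switala). By K\"unneth, $H^\bullet(\mathbb{P}^1\times\mathbb{P}^{n-1},\mathbb{C})$ is concentrated in even degrees, with Betti numbers $1,2,2,\dots,2,1$ in degrees $0,2,\dots,2n$, and the restriction map $H^\bullet(\mathbb{P}^{2n-1},\mathbb{C})\to H^\bullet(\mathbb{P}^1\times\mathbb{P}^{n-1},\mathbb{C})$, which sends the hyperplane class $h$ to $h_1+h_2$, is injective in every degree $\le 2n$. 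Feeding this into the topological formula yields $\lambda_{0,q}(S/I)=0$ for $4\le q\le n$, i.e.\ $H^i_I(S)=0$ for $n\le i\le 2n-4$, so that the surviving non-vanishing is exactly at $i=n-1$ and $i=2n-3$. An alternative to Garc\'ia L\'opez--Sabbah is to convert the vanishing in the middle range into $H^{i-1}(\mathbb{P}^{2n-1}\setminus(\mathbb{P}^1\times\mathbb{P}^{n-1}),\mathcal{O}(j))=0$ for all twists $j$ (using the $\mathbb{G}_m$-bundle $\mathbb{A}^{2n}\setminus\{0\}\to\mathbb{P}^{2n-1}$ together with the vanishing $H^\ell(\mathbb{P}^{2n-1},\mathcal{O}(j))=0$ for $0<\ell<2n-1$), and then to compute this twisted sheaf cohomology through the affine fibration $\mathbb{P}^{2n-1}\setminus(\mathbb{P}^1\times\mathbb{P}^{n-1})\to\mathrm{Gr}(2,n)$ with fibers $\mathrm{PGL}_2$, via the Borel--Weil--Bott theorem.

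The main obstacle is precisely this last computation: fixing the Garc\'ia L\'opez--Sabbah formula, or, in the Bott approach, choosing conventions for the tautological bundles and working out the combinatorics of the relevant weights, and then matching indices so that the non-vanishing degrees come out to be exactly $n-1$ and $2n-3$. The characteristic-zero assumption is used only at this step, through the comparison with singular cohomology, equivalently through Borel--Weil--Bott vanishing, which fails in characteristic $p$, consistently with the fact that $H^i_I(S)=0$ for all $i\neq n-1$ when $\chara(K)=p>0$ by Peskine--Szpiro \cite{PS73}; the localization step and the reduction to twisted sheaf cohomology are characteristic-free.
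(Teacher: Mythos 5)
You are proving a statement that the paper itself does not prove: Theorem \ref{witt} is quoted from Witt \cite{W12}, so there is no internal argument to compare with, and your route (reduction to multiplicities of point-supported local cohomology of the cone over the Segre variety) is in any case different from Witt's. Most of your reduction is sound: Cohen--Macaulayness \cite{HE71} gives $H^i_I(S)=0$ for $i<n-1$ and $H^{n-1}_I(S)\neq0$; the characteristic-zero result of Bruns--Schw\"anzl \cite{BS90} gives $H^{2n-3}_I(S)\neq0$ and vanishing above $2n-3$; the identity $\xi\,[i,j]=(M_n)_{1,i}[1,j]-(M_n)_{1,j}[1,i]$ correctly shows $IS_\xi$ is a complete intersection of height $n-1$, so $H^i_I(S)$ is $\mathfrak m$-torsion for $i\neq n-1$; and Lyubeznik's finiteness results (after the base change of Remark \ref{reductiontoc}) legitimately yield $H^i_I(S)\cong E^{\oplus c_i}$ with $c_i=\lambda_{0,2n-i}(S/I)$, so the theorem is equivalent to $c_i=0$ for $n\le i\le 2n-4$.

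The gap is that this last, decisive step is not carried out. You invoke ``the topological formula'' of Garc\'ia L\'opez--Sabbah (or, alternatively, a Borel--Weil--Bott computation on the complement) without stating it, and you yourself identify fixing the formula and matching the indices as ``the main obstacle''; the argument therefore stops exactly where the real content of Witt's theorem begins. Moreover, the two topological facts you do record --- the Betti numbers $1,2,\dots,2,1$ of $Y=\mathbb{P}^1\times\mathbb{P}^{n-1}$ and the injectivity of $H^\bullet(\mathbb{P}^{2n-1},\mathbb{C})\to H^\bullet(Y,\mathbb{C})$ --- are not sufficient inputs for any correct version of such a formula: the multiplicities $c_i$ are governed by the cohomology of the punctured affine cone, equivalently of its complement $\mathbb{C}^{2n}\setminus C$ (the rank-two locus, homotopy equivalent to the Stiefel manifold $V_2(\mathbb{C}^n)$, whose cohomology is an exterior algebra on classes in degrees $2n-3$ and $2n-1$), i.e.\ by the kernels and cokernels of cup product with $h_1+h_2$ on $H^\bullet(Y)$, not merely by Betti numbers and injectivity of restriction. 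Indeed, a plausible but misremembered formula such as $\lambda_{0,q}=\dim\mathrm{coker}\big(H^{q-1}(\mathbb{P}^{2n-1},\mathbb{C})\to H^{q-1}(Y,\mathbb{C})\big)$ would give nonzero values throughout the middle range, contradicting the theorem. With the correct formula (or with a direct Gysin/de Rham computation showing $c_i=\dim\widetilde{H}^{2n+i-1}(\mathbb{C}^{2n}\setminus C,\mathbb{C})$ for $i\ge n$) the numbers do come out right, namely $c_i=0$ for $n\le i\le 2n-4$ and $c_{2n-3}=1$, consistently with Theorem \ref{LSW}; but that verification is precisely what is missing, so as written the proposal does not constitute a proof.
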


\begin{thm}\bf (Lyubeznik, Singh, Walther,  \cite[Theorem 1.2]{LSW14})\it \label{LSW}
Let $S$ and $I$ be as above, and let\break $\mathfrak{m}=(x,y,x_1,\dots,x_{2n-2})$ the homogeneous maximal ideal of $S$. Then we have an isomorphism of $S$-modules
\begin{equation*}
H^{2n-3}_{I}(S)\cong H^{2n}_{\mathfrak{m}}(S).
\end{equation*}
\end{thm}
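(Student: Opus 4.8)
The plan is to use that, in characteristic zero, $H^{2n-3}_I(S)$ is a holonomic $\mathcal{D}_S$-module which turns out to be supported only at the vertex, so it is a finite direct sum $E^{\oplus k}$ of copies of $E=H^{2n}_{\mathfrak{m}}(S)$; the real work is then to prove $k=1$.

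First I would reduce to $K=\mathbb{C}$ as in Remark \ref{reductiontoc}; then every $H^i_I(S)$ is a holonomic $\mathcal{D}_S$-module (Lyubeznik), and by Theorem \ref{witt} the module $H^{2n-3}_I(S)$ is nonzero and is the top nonvanishing local cohomology of $I$. Next I would show $\mathrm{Supp}_S H^{2n-3}_I(S)=\{\mathfrak{m}\}$. Let $\mathfrak{p}\neq\mathfrak{m}$ be a prime of $S$ containing $I$. The generic determinantal variety of $2\times n$ matrices of rank $\le 1$ is smooth away from its vertex, since the singular locus of a rank-$\le r$ locus is the rank-$\le r-1$ locus, here the origin; hence $(S/I)_{\mathfrak{p}}$ is regular. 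As $I$ is prime, $I_{\mathfrak{p}}$ is generated by part of a regular system of parameters of $S_{\mathfrak{p}}$, so it is a complete intersection of height $\hgt I=n-1$ and
\[
\big(H^{2n-3}_I(S)\big)_{\mathfrak{p}}=H^{2n-3}_{I_{\mathfrak{p}}}(S_{\mathfrak{p}})=0\qquad(\text{since }2n-3>n-1).
\]
Thus $H^{2n-3}_I(S)$ is a holonomic $\mathcal{D}_S$-module supported at the closed point $\mathfrak{m}$, and by Kashiwara's equivalence such a module is a finite direct sum of copies of $E=H^{2n}_{\mathfrak{m}}(S)$. Being nonzero, $H^{2n-3}_I(S)\cong E^{\oplus k}$ for some $k\ge 1$.

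To pin down $k$, note that $E$ is injective and $\mathrm{Hom}_S(K,E)=K$, so $k=\dim_K\mathrm{Hom}_S(K,H^{2n-3}_I(S))$. I would read this off from the spectral sequence
\[
E_2^{p,q}=\mathrm{Ext}^p_S\big(K,H^q_I(S)\big)\ \Longrightarrow\ \mathrm{Ext}^{p+q}_S\big(K,R\Gamma_I(S)\big).
\]
Because $K=S/\mathfrak{m}$ is perfect over the regular ring $S$ and is $I$-torsion, $R\mathrm{Hom}_S(K,R\Gamma_I(S))\simeq R\Gamma_I\big(R\mathrm{Hom}_S(K,S)\big)\simeq R\Gamma_I(K)[-2n]\simeq K[-2n]$ up to a degree twist, so the abutment is $K$ in total degree $2n$ and $0$ otherwise. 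By Theorem \ref{witt} the $E_2$-page has only the two rows $q=n-1$ and $q=2n-3$, and the row $q=2n-3$ is concentrated in $p=0$ with value $K^{\oplus k}$ since $E^{\oplus k}$ is injective. The only differential between the rows is $d_{n-1}\colon E_{n-1}^{0,2n-3}\to E_{n-1}^{n-1,n-1}$, and comparing with the abutment in total degrees $2n-3$ and $2n-2$ forces it to be an isomorphism $K^{\oplus k}\xrightarrow{\ \sim\ }\mathrm{Ext}^{n-1}_S(K,H^{n-1}_I(S))$. Hence $k=\dim_K\mathrm{Ext}^{n-1}_S(K,H^{n-1}_I(S))=\lambda_{n-1,n+1}(S/I)$, a Lyubeznik number of the determinantal ring $A=S/I$.

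The main obstacle is the remaining computation $\lambda_{n-1,n+1}(A)=1$. Here I would use that $A$ has an isolated singularity at $\mathfrak{m}$ with $\mathrm{Proj}(A)=\mathbb{P}^1\times\mathbb{P}^{n-1}$ (the balanced rational normal scroll, i.e.\ the case of all $n_i=1$ of Section \ref{sectionscrolls}): by Garc\'ia L\'opez--Sabbah the Lyubeznik numbers of a complex isolated singularity are dimensions of the singular cohomology of its link $L$, which here is the unit circle bundle of $\mathcal{O}_{\mathbb{P}^1\times\mathbb{P}^{n-1}}(-1)$. A Gysin sequence together with the hard Lefschetz property of $\mathbb{P}^1\times\mathbb{P}^{n-1}$ gives $H^\bullet(L;\mathbb{C})=\mathbb{C}$ in degrees $0,2,2n-1,2n+1$ and $0$ elsewhere; the extra class in degree $2$, which reflects $b_2(\mathbb{P}^1\times\mathbb{P}^{n-1})=2>1=b_2(\mathbb{P}^{2n-1})$, is what contributes, yielding $\lambda_{n-1,n+1}(A)=1$. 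Alternatively, since $S=\mathrm{Sym}(\mathbb{C}^2\otimes\mathbb{C}^n)$ and all the modules $H^i_I(S)$, as well as $E$, are $\mathrm{GL}_2\times\mathrm{GL}_n$-equivariant, one can decompose $H^{2n-3}_I(S)$ into irreducibles and check that $E$ occurs with multiplicity $1$, in the spirit of the equivariant description of local cohomology of determinantal ideals. Once $k=1$ is established, the isomorphism $H^{2n-3}_I(S)\cong H^{2n}_{\mathfrak{m}}(S)$ follows.
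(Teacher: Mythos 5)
This statement is not proved in the paper at all: it is quoted verbatim from \cite[Theorem 1.2]{LSW14}, whose original argument is purely algebraic and characteristic-free (it holds over an arbitrary commutative Noetherian base ring). Your proposal is therefore a genuinely different route, and in substance it works, but only in characteristic zero: holonomicity of $H^{2n-3}_I(S)$, the smoothness of the rank-one locus away from the vertex, Kashiwara's equivalence giving $H^{2n-3}_I(S)\cong E^{\oplus k}$, and the hyperext spectral sequence forcing $k=\dim_K\mathrm{Ext}^{n-1}_S(K,H^{n-1}_I(S))$ are all correct (I checked that the only possible differential between the two rows supplied by Theorem \ref{witt} is $d_{n-1}\colon E_{n-1}^{0,2n-3}\to E_{n-1}^{n-1,n-1}$ and that the abutment $\mathrm{Ext}^{\bullet}_S(K,S)$, concentrated in degree $2n$, forces it to be bijective), and your Gysin computation of $H^{\bullet}(L;\mathbb{C})$ for the circle bundle over $\mathbb{P}^1\times\mathbb{P}^{n-1}$ is right. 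Since the paper only invokes Theorem \ref{LSW} inside the characteristic-zero Theorem \ref{cd2zerosbis}, the loss of the characteristic-free statement is acceptable for this application, but it is worth saying explicitly that you prove less than what \cite{LSW14} provides.

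Two points should be tightened. First, Remark \ref{reductiontoc} transfers numerical data (dimensions of graded pieces of local cohomology, hence of the Ext modules defining Lyubeznik numbers), not the isomorphism itself; so the reduction to $\mathbb{C}$ should be applied to the multiplicity $k=\dim_K\mathrm{Ext}^{n-1}_S(K,H^{n-1}_I(S))$, which is invariant under the faithfully flat field extensions used there, while the structural steps (support, holonomicity, Kashiwara) are carried out directly over $K$. Second, your appeal to Garc\'ia L\'opez--Sabbah is stated loosely: their theorem identifies $\lambda_{p,d}$ with a specific (co)homology group of the link (equivalently of $H^{\bullet}_{\{x\}}(V)$), and the degree you name ($\tilde H^2(L)$ rather than its Poincar\'e dual $H^{2n-1}(L)$) does not match the general formula -- for instance it would give $\lambda_{d,d}=0$, which is false. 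Here this is harmless: all nonzero reduced Betti numbers of $L$ equal $1$, and your spectral-sequence step already gives $k\geq 1$ independently, so whichever degree the correct statement picks out yields $k=1$; but the citation should be made with the precise indexing, or replaced by the equivariant/Witt-style identification you mention as an alternative.
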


\begin{proof}[Proof of Theorem \ref{cd2zerosbis}] It is clear that $H^{n-1}_{J_n}(R)\neq0$ and $H^{i}_{J_n}(R)=0$ for $i<n-1$ , since $\hgt(J_n)=n-1$. By Theorem \ref{ara2zeros}, we have also $H_{J_n}^{i}(R)=0$ for $i>2n-5$. For $n=4$, one has that $2n-5=3=\hgt(J_4)$, then $H^3_{J_4}(R) \neq 0$.
\par Now let $n \geq 5$ and let $S$, $A$ and $I$ be as in Notation \ref{notationcd2zeros}.

\par We consider the map $S \xrightarrow{\cdot x} S$, it induces a long exact sequence of local cohomology modules:
\begin{equation}\label{seqcd2zeros1}
\cdots\rightarrow H_{I}^j(S)\rightarrow H_{IA}^j(A)\rightarrow H_{I}^{j+1}(S)\xrightarrow{\cdot x}H_{I}^{j+1}(S)\rightarrow\cdots .
\end{equation}

For $j=2n-4$ we get
\begin{equation*}
 H_{IA}^{2n-4}(A)=\mathrm{ker}\big(H_{I}^{2n-3}(S)\xrightarrow{\cdot x}H_{I}^{2n-3}(S)\big),
\end{equation*}
since $H^{2n-4}_{I}(S)=0$ by Theorem \ref{witt}. On the other hand, by using the \v{C}ech complex, it is easy to see that
\begin{equation*}
 H_{\mathfrak{m} A}^{2n-1}(A)=\mathrm{ker}\big(H_{\mathfrak{m}}^{2n}(S)\xrightarrow{\cdot x}H_{\mathfrak{m}}^{2n}(S)\big).
\end{equation*}

Then the isomorphism of Theorem \ref{LSW} yields $H_{IA}^{2n-4}(A)\cong H_{\mathfrak{m}A}^{2n-1}(A)$, the latter being non-zero since $\mathfrak{m}A$ is the homogeneous maximal ideal of $A$. Moreover, by Theorem \ref{witt}, if $n-1<j<2n-4$, then $H_{I}^j(S)=H_{I}^{j+1}(S)=0$. Therefore $ H_{IA}^j(A)=0$ by virtue of \eqref{seqcd2zeros1}. 

\par Now we consider the multiplication map $A \xrightarrow{\cdot y} A$ and the corresponding long exact sequence:
\begin{equation}\label{seqcd2zeros2}
\cdots\rightarrow H_{IA}^i(A)\rightarrow H_{IR}^i(R)\rightarrow H_{IA}^{i+1}(A)\xrightarrow{\cdot y}H_{IA}^{i+1}(A)\rightarrow \cdots.
\end{equation}

For $i=2n-5$, from Theorem \ref{LSW} it follows that
\begin{equation*}
H_{IR}^{2n-5}(R)=\mathrm{ker}\big(H_{IA}^{2n-4}(A)\xrightarrow{\cdot y}H_{IA}^{2n-4}(A)\big)\cong\mathrm{ker} \big(H_{\mathfrak{m}A}^{2n-1}(A)\xrightarrow{\cdot y}H_{\mathfrak{m}A}^{2n-1} (A) \big)=H_{\mathfrak{m}R}^{2n-2}(R),
\end{equation*}
since $H^{2n-5}_{IA}(A)=0$ and $H_{\mathfrak{m}A}^{2n-2}(A)=0$. Then $\mathfrak{m}R$ is the homogeneous maximal ideal of $R$, hence $H_{\mathfrak{m}R}^{2n-2}(R)\neq0$. This implies $H^{2n-5}_{J_n}(R) \neq 0$, since $J_n=IR$.
\par It remains to prove that $H^i_{J_n}(R)=0$ for $n-1<i<2n-5$. For such $i$, we have $H_{IA}^i(A)= H_{IA}^{i+1}(A)=0$, as shown above. Then \eqref{seqcd2zeros2} yields $H_{IR}^i(R)=0$, as required.
\end{proof}

\begin{proof}[Proof of Theorem \ref{AraCd2zeros}]
For $n=2$, the ideal $J_2$ is principal, thus $\cd(J_2) = \ara(J_2) = 1$. For $n=3$, we have $\cd(J_3)= \ara(J_3)=2$, as computed in Example \ref{2x3case}. Let $n \geq 4$. If $\chara(K)=0$, the claim follows from Theorem \ref{cd2zerosbis}. If $\chara(K)=p>0$, the claim follows from \cite[Proposition 4.1, p. 110]{PS73}, since $J_n$ is a perfect ideal. In fact, $\hgt(J_n)=\mathrm{grade}(J_n)=n-1$. Moreover, by \cite[Theorem 2, p. 201]{EN62}, $\pd_R(R/I)=n-2+1=n-1$.
\end{proof}

\section*{Acknowledgments}
The authors wish to thank Aldo Conca, Srikanth Iyengar and, in particular, Anurag Singh, together with the organizers of Pragmatic 2014 (Catania, Italy), for the beautiful school, the interesting lectures, many useful suggestions and insights. The authors are also in debt with Matteo Varbaro and Margherita Barile for helpful discussions and hints.

\end{document}